\newtheorem{theorem}{Theorem}[section]
\newtheorem{lemma}[theorem]{Lemma}
\theoremstyle{definition}
\theoremstyle{remark}
\newtheorem{remark}[theorem]{Remark}
\numberwithin{equation}{section}
\begin{document}

% \title[short text for running head]{full title}
\title[DSM for solving NOE with monotone operators]
{Dynamical systems method for solving nonlinear equations
with monotone operators}

%    Only \author and \address are required; other information is
%    optional.  Remove any unused author tags.

%    author one information
% \author[short version for running head]{name for top of paper}
\author{N. S. Hoang}
\address{Mathematics Department, Kansas State University,
Manhattan, KS 66506-2602, USA}
\curraddr{}
\email{nguyenhs@math.ksu.edu}
\thanks{}

%    author two information
\author{A. G. Ramm}
\address{Mathematics Department, Kansas State University,
Manhattan, KS 66506-2602, USA}
\curraddr{}
\email{ramm@math.ksu.edu}
\thanks{}

%    \subjclass is required.
\subjclass[2000]{Primary: 65R30; Secondary: 47J05, 47J06, 47J35,}

\date{}

\keywords{Dynamical systems method (DSM),
nonlinear operator equations, monotone operators,
discrepancy principle.}

\dedicatory{}

%    Abstract is required.
\begin{abstract}
A version of the Dynamical Systems Method (DSM) for solving ill-posed
nonlinear equations with monotone operators in a Hilbert space
is studied in this paper.
An {\it a posteriori} stopping rule, based on a discrepancy-type
principle is proposed and justified mathematically.
The results of two numerical experiments are presented. They show that
the proposed version of DSM is numerically efficient.
The numerical experiments consist of solving  nonlinear integral
equations.
\end{abstract}

\maketitle

%    Text of article.

%    Bibliographies can be prepared with BibTeX using amsplain,
%    amsalpha, or (for "historical" overviews) natbib style.

\section{Introduction}

In this paper we study a Dynamical Systems Method (DSM) for solving the equation
\begin{equation}
\label{aeq1}
F(u)=f,
\end{equation}
 where
$F$ is a nonlinear twice Fr\'{e}chet differentiable monotone operator in a
real Hilbert space $H$, and equation \eqref{aeq1} is assumed solvable.
Monotonicity means that
\begin{equation}
\label{ceq2}
\langle F(u)-F(v),u-v\rangle\ge 0,\quad \forall u,v\in H.
\end{equation}
Here, $\langle \cdot,\cdot\rangle$ denotes the inner product in $H$.
It is known (see, e.g., \cite{R499}), that the set $\mathcal{N}:=\{u:F(u)=f\}$ is closed and convex if $F$ is monotone and continuous.
A closed and convex set in a Hilbert space has a unique minimal-norm element. This element in $\mathcal{N}$ we denote $y$, $F(y)=f$.
We assume that
\begin{equation}
\label{ceq3}
\sup_{\|u-u_0\|\le R}\|F^{(j)}(u)\|\le M_j(u_0,R),\quad 0\le j\le 2,
\end{equation}
where $u_0\in H$ is an element of $H$,
$R>0$ is arbitrary,
and $f=F(y)$ is not known; but
 $f_\delta$, the noisy data, are known and $\|f_\delta-f\|\le \delta$. If $F'(u)$
 is not boundedly invertible, then
 solving for $u$ given noisy data $f_\delta$ is often (but not always) an ill-posed problem.
When $F$ is a linear bounded operator many methods for a stable solution of \eqref{aeq1} were proposed 
(see \cite{I}--\cite{R499} and the references therein). However, when $F$
is nonlinear then the theory is
%of discrepancy type and
%the convergence of the method using such stopping time is
less complete.

The DSM for solving equation \eqref{aeq1} was extensively studied in \cite{R499}--\cite{491}.
In \cite{R499} the following version of the DSM for solving equation
\eqref{aeq1} was studied:
\begin{equation}
\label{aeq2}
\dot{u}_\delta = -\big{(}F'(u_\delta) + a(t)I\big{)}^{-1}\big{(}F(u_\delta)+a(t)u_\delta - f_\delta\big{)},
\quad u_\delta(0)=u_0.
\end{equation}
Here $F$ is a monotone operator, and $a(t)>0$ is a continuous function,
defined for all $t\geq 0$, strictly monotonically decaying,
$\lim_{t\to \infty}a(t)=0$. These assumptions on $a(t)$ hold throughout
the paper and are not repeated. Additional assumptions
on $a(t)$ will appear later.
Convergence of the above DSM  was proved in \cite{R499} for any initial
value $u_0$ with
an {\it a priori} choice of stopping time $t_\delta$,
provided that $a(t)$ is suitably chosen.

The theory of monotone operators is presented in many books, e.g.,
in \cite{D}, \cite{P}, \cite{V}.
Most of the results of the theory of monotone operators, used in
this paper, can be found in \cite{R499}.
In \cite{OR} methods for solving nonlinear equations in a
finite-dimensional space are discussed.
%In \cite{T} a
%method for solving equation \eqref{aeq1} with a monotone operator
%using a solution to a Cauchy problem is discussed. The assumptions
%in \cite{T} are much more restrictive than ours, the rule R2 on p.196

In this paper we propose and justify a stopping rule based on a discrepancy
principle (DP) for the DSM \eqref{aeq2}.
The main result of this paper is Theorem~\ref{mainthm} in which
a DP is formulated,
the existence of the stopping time $t_\delta$ is proved, and
the convergence of the DSM with the proposed DP is justified under some
natural assumptions apparently for the first time for a wide class
of nonlinear equations with monotone operators.

These results are new from the theoretical point of view and very useful
practically.
The auxiliary results in our paper are also new and can be used in other
problems of numerical analysis. These auxiliary results are formulated
in Lemmas \ref{lem0}--\ref{lem1}, \ref{lemma11}, \ref{lemauxi1}, \ref{lemauxi2}, and in the remarks. In particular, in Remark~\ref{hehehihiha} we emphasize
that the trajectory of the solution stays in a ball of a fixed radius $R$
for all $t\geq 0$.

In Section 4 the results of two numerical experiments are presented.
In the second experiment we demonstrate numerically that our method for
solving equation \eqref{aeq1} can be used even for a wider class
of equations than the basic Theorem~\ref{mainthm} guarantees.

\section{Auxiliary results}
\label{sec2}

Let us consider the following equation:
\begin{equation}
\label{2eq2}
F(V_{\delta,a})+aV_{\delta,a}-f_\delta = 0,\qquad a>0,
\end{equation}
where $a=const$. It is known (see, e.g., \cite{R499})
that equation \eqref{2eq2} with monotone continuous operator
$F$ has a unique solution for any $f_\delta\in H$.

Let us recall the following result from \cite[p. \#112]{R499}:
\begin{lemma}
\label{rejectedlem}
Assume that equation \eqref{aeq1} is solvable, $y$ is its minimal-norm solution, and assumptions
\eqref{ceq2} and \eqref{ceq3} hold. Then
$$
\lim_{a\to 0} \|V_{0,a}-y\| = 0,
$$
where $V_{0,a}$ solves \eqref{2eq2} with $\delta=0$.
\end{lemma}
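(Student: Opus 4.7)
The plan is to exploit monotonicity of $F$ together with the defining equation $F(V_{0,a}) + a V_{0,a} = f = F(y)$ to obtain a uniform bound $\|V_{0,a}\| \le \|y\|$, then extract a weakly convergent subsequence along $a \to 0$ and identify its limit as the minimal-norm solution $y$.

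First I would rewrite the equation as $F(V_{0,a}) - F(y) = -a V_{0,a}$ and pair with $V_{0,a} - y$. Monotonicity forces the left-hand side to be nonnegative, which yields $\langle V_{0,a}, V_{0,a} - y\rangle \le 0$, and by Cauchy--Schwarz $\|V_{0,a}\| \le \|y\|$. In particular the family $\{V_{0,a}\}_{a>0}$ is bounded in $H$, and from the same identity $\|F(V_{0,a}) - f\| = a \|V_{0,a}\| \le a\|y\| \to 0$ as $a \to 0$, so the residuals converge strongly to $0$.

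Next, pick any sequence $a_n \to 0$; by boundedness we may pass to a subsequence (still denoted $a_n$) with $V_{0,a_n} \rightharpoonup v$ for some $v \in H$. To show $v \in \mathcal{N}$ I would use the standard monotonicity trick: for arbitrary $w \in H$,
\begin{equation*}
\langle F(V_{0,a_n}) - F(w), V_{0,a_n} - w\rangle \ge 0,
\end{equation*}
and letting $n \to \infty$ (using $F(V_{0,a_n}) \to f$ strongly and $V_{0,a_n} \rightharpoonup v$ weakly) gives $\langle f - F(w), v - w\rangle \ge 0$ for every $w \in H$. Taking $w = v + t h$ with $t > 0$ and $h \in H$ arbitrary, dividing by $t$, and sending $t \to 0^+$ (using the hemicontinuity guaranteed by \eqref{ceq3}) yields $\langle f - F(v), h\rangle \le 0$ for all $h$, so $F(v) = f$. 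The lower semicontinuity of the norm under weak convergence then gives $\|v\| \le \liminf_n \|V_{0,a_n}\| \le \|y\|$, and since $y$ is the minimal-norm element of $\mathcal{N}$ we must have $\|v\| = \|y\|$ and $v = y$ by uniqueness of the minimal-norm element in a closed convex set.

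Finally, weak convergence $V_{0,a_n} \rightharpoonup y$ together with $\|V_{0,a_n}\| \to \|y\|$ upgrades to strong convergence in the Hilbert space $H$. Since every sequence $a_n \to 0$ has a subsequence along which $V_{0,a_n} \to y$, and the limit is always the same element $y$, the full net satisfies $\lim_{a\to 0}\|V_{0,a} - y\| = 0$. The main obstacle is the step identifying the weak limit as a solution of \eqref{aeq1}: it requires combining weak convergence of $V_{0,a_n}$ with strong convergence of $F(V_{0,a_n})$ through the monotonicity inequality and then passing to a limit in a variational form, which is the only nontrivial ingredient; the remaining estimates are immediate from \eqref{ceq2} and the definition of the minimal-norm solution.
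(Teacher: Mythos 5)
Your proof is correct, and it is essentially the standard argument: the paper itself gives no proof of this lemma but recalls it from \cite[p.~112]{R499}, where the proof proceeds exactly along your lines (the bound $\|V_{0,a}\|\le\|y\|$ from monotonicity, strong convergence of the residual, Minty's trick to identify the weak subsequential limit as a solution, weak lower semicontinuity of the norm plus uniqueness of the minimal-norm element, and the weak-plus-norm-convergence upgrade to strong convergence). No gaps; the hemicontinuity you invoke is indeed available since \eqref{ceq3} makes $F$ locally Lipschitz.
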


%For simplicity let us denote $V_\delta:=V_{\delta,a}$ when $\delta\not=0$ and $a>0$.

\begin{lemma}
\label{lem0}
If \eqref{ceq2} holds and $F$ is continuous, then
$\|V_{\delta,a}\|=O(\frac{1}{a})$ as $a\to\infty$, and
\begin{equation}
\label{4eq2}
\lim_{a\to\infty}\|F(V_{\delta,a})-f_\delta\|=\|F(0)-f_\delta\|.
\end{equation}
\end{lemma}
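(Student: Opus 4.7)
The plan is to exploit monotonicity together with the defining equation \eqref{2eq2} to bound $\|V_{\delta,a}\|$, and then use continuity of $F$ to deduce the limit.

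First I would apply the monotonicity inequality \eqref{ceq2} with $u = V_{\delta,a}$ and $v = 0$, which gives
$\langle F(V_{\delta,a}) - F(0),\, V_{\delta,a}\rangle \ge 0$, i.e.\ $\langle F(V_{\delta,a}),\, V_{\delta,a}\rangle \ge \langle F(0),\, V_{\delta,a}\rangle$. Next, from \eqref{2eq2} one has $F(V_{\delta,a}) = f_\delta - a V_{\delta,a}$; taking the inner product with $V_{\delta,a}$ yields $\langle F(V_{\delta,a}), V_{\delta,a}\rangle = \langle f_\delta, V_{\delta,a}\rangle - a\|V_{\delta,a}\|^2$. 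Combining these two relations and applying Cauchy--Schwarz gives
\begin{equation*}
a\|V_{\delta,a}\|^2 \le \langle f_\delta - F(0),\, V_{\delta,a}\rangle \le \|f_\delta - F(0)\|\,\|V_{\delta,a}\|,
\end{equation*}
so $\|V_{\delta,a}\| \le \|f_\delta - F(0)\|/a$, which is exactly the $O(1/a)$ estimate.

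For the second assertion, the bound just proved implies $V_{\delta,a}\to 0$ as $a\to\infty$. Since $F$ is continuous, $F(V_{\delta,a})\to F(0)$, hence $\|F(V_{\delta,a}) - f_\delta\|\to \|F(0) - f_\delta\|$, which is \eqref{4eq2}. (Equivalently, rewriting \eqref{2eq2} as $F(V_{\delta,a}) - f_\delta = -a V_{\delta,a}$ and taking norms gives the same conclusion via the estimate on $a\|V_{\delta,a}\|$.)

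There is essentially no serious obstacle here; the only nontrivial point is the standard device of testing monotonicity against $v=0$ to turn the nonlinear equation \eqref{2eq2} into a coercive estimate on $V_{\delta,a}$. Solvability of \eqref{2eq2} for each fixed $a>0$ is quoted before the statement of the lemma, so it is not necessary to reprove it.
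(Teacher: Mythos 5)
Your proof is correct and follows essentially the same route as the paper: testing monotonicity against $v=0$, pairing the defining equation \eqref{2eq2} with $V_{\delta,a}$ to get $a\|V_{\delta,a}\|^2\le\langle f_\delta-F(0),V_{\delta,a}\rangle$, and then invoking continuity of $F$ for \eqref{4eq2}. The only difference is cosmetic (you isolate $F(V_{\delta,a})=f_\delta-aV_{\delta,a}$ first rather than rewriting the equation with $F(0)$ inserted), so nothing further is needed.
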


\begin{proof}
%First, we claim that $aV_{\delta,a}$ is bounded for all $a>0$. Indeed,
Rewrite \eqref{2eq2} as
$$
F(V_{\delta,a}) - F(0) + aV_{\delta,a} + F(0)-f_\delta = 0.
$$
Multiply this equation
by $V_{\delta,a}$, use inequality $\langle F(V_{\delta,a})-F(0),V_{\delta,a}-0\rangle \ge 0$ from \eqref{ceq2} and get:
$$
a\|V_{\delta,a}\|^2\le\langle aV_{\delta,a} + F(V_{\delta,a})-F(0), V_{\delta,a}\rangle =
\langle f_\delta-F(0), V_{\delta,a}\rangle \le \|f_\delta-F(0)\|\|V_{\delta,a}\|.
$$
Therefore, %$\|V_{\delta,a}\|\le {\|f_\delta-F(0)\|}{a}=O{\frac 1 a}$,
%so
$\|V_{\delta,a}\|=O(\frac{1}{a})$. This and the continuity of $F$ imply \eqref{4eq2}.
\end{proof}

Let $a=a(t)$, $0<a(t)\searrow 0$, and assume $a\in C^1[0,\infty)$.
Then the solution $V_\delta(t):=V_{\delta,a(t)}$ of \eqref{2eq2} is a function of $t$.
From the triangle inequality one gets
$$
\|F(V_\delta(0))-f_\delta\|\ge\|F(0) - f_\delta\| -\|F(V_\delta(0))-F(0)\|.
$$
%One has
From Lemma~\ref{lem0} it follows that for large $a(0)$ one has
$$
\|F(V_\delta(0))-F(0)\|\le M_1\|V_\delta(0)\|=O\bigg{(}\frac{1}{a(0)}\bigg{)}.
$$
Therefore,
if $\|F(0)-f_\delta\|>C\delta$, then $\|F(V_\delta(0))-f_\delta\|\ge (C-\epsilon)\delta$,
where $\epsilon>0$ is sufficiently small, for sufficiently large $a(0)>0$.

Below the words decreasing and increasing mean strictly
decreasing and strictly increasing.

\begin{lemma}
\label{leminde}
Assume $\|F(0)-f_\delta\|>0$. Let $0<a(t)\searrow 0$, and let $F$ be monotone.
Denote
$$
\phi(t):=\|F(V_\delta(t)) - f_\delta\|,\quad  \psi(t):=\|V_\delta(t)\|,
$$
where $V_\delta(t)$ solves \eqref{2eq2} with $a=a(t)$.
Then
$\phi(t)$ is decreasing, and $\psi(t)$ is increasing.
\end{lemma}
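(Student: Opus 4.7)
The plan is to compare $V_1 := V_\delta(t_1)$ and $V_2 := V_\delta(t_2)$ for two times $t_1 < t_2$ (so $a_1 := a(t_1) > a(t_2) =: a_2$), extracting both claims from the difference of the two instances of \eqref{2eq2}. Subtracting them gives
$$F(V_1) - F(V_2) = a_2 V_2 - a_1 V_1;$$
pairing with $V_1 - V_2$ and using monotonicity \eqref{ceq2} to drop the non-negative left-hand inner product yields the single inequality
$$\langle a_1 V_1 - a_2 V_2, V_1 - V_2\rangle \leq 0. \qquad (\star)$$
Both claims will be read off from $(\star)$ by exploiting two different algebraic decompositions of the vector $a_1 V_1 - a_2 V_2$.

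For $\psi$, I would write $a_1 V_1 - a_2 V_2 = a_2 (V_1 - V_2) + (a_1 - a_2) V_1$, which turns $(\star)$ into $a_2 \|V_1 - V_2\|^2 + (a_1 - a_2)\langle V_1, V_1 - V_2\rangle \leq 0$; since $a_1 - a_2 > 0$, this gives $\|V_1\|^2 \leq \langle V_1, V_2\rangle$, and the mirror decomposition $a_1 V_1 - a_2 V_2 = a_1(V_1 - V_2) + (a_1 - a_2)V_2$ yields $\langle V_1, V_2\rangle \leq \|V_2\|^2$. Chaining these with Cauchy--Schwarz gives $\|V_1\| \leq \|V_2\|$. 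To upgrade to strict inequality, note that $V_1 = V_2$ would force $(a_1 - a_2)V_1 = 0$ through the two defining equations, so $V_1 = 0$ and $F(0) = f_\delta$, contradicting the standing hypothesis $\|F(0) - f_\delta\| > 0$. Hence $V_1 \neq V_2$, which in the rearranged form of $(\star)$ upgrades $\|V_1\|^2 \leq \langle V_1, V_2\rangle$ to strict, and thus $\|V_1\| < \|V_2\|$.

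For $\phi$, since $F(V_\delta) - f_\delta = -a V_\delta$ we have $\phi = a\psi$, so the claim becomes $a_1 \|V_1\| > a_2 \|V_2\|$. Expanding $(\star)$ as $a_1\|V_1\|^2 - (a_1 + a_2)\langle V_1, V_2\rangle + a_2\|V_2\|^2 \leq 0$ and applying Cauchy--Schwarz to $\langle V_1, V_2\rangle$ produces the master factorization
$$(a_1\|V_1\| - a_2\|V_2\|)(\|V_1\| - \|V_2\|) \leq 0,$$
which combined with the strict inequality $\|V_1\| < \|V_2\|$ already established gives $\phi(t_1) \geq \phi(t_2)$. The main obstacle I expect is promoting this to a strict inequality: equality in the factorization requires saturating Cauchy--Schwarz, i.e., $V_1 \parallel V_2$, so I would handle the parallel case $V_1 = \lambda V_2$ (with $0 < \lambda < 1$ forced by the strict norm inequality) separately, using monotonicity of the scalar map $s \mapsto \langle F(sV_2), V_2\rangle$ — whose values at $s = \lambda$ and $s = 1$ are determined by the two defining equations — to force $a_1 \lambda \geq a_2$, and then closing the residual gap via the non-degeneracy built into $\|F(0) - f_\delta\| > 0$.
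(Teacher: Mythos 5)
Your core computation is the paper's: subtracting the two copies of \eqref{2eq2} and pairing with $V_1-V_2$ is exactly inequality \eqref{1eq3}, and your ``master factorization'' $(a_1\|V_1\|-a_2\|V_2\|)(\|V_1\|-\|V_2\|)\le 0$ is \eqref{2eq6}. Where you genuinely differ is in how the monotonicity of $\psi$ is extracted: the paper runs a trichotomy on the sign of $\psi(t_2)-\psi(t_1)$ and infers the ordering of $t_1,t_2$ from that of $a(t_1),a(t_2)$, whereas you read $\|V_1\|^2\le\langle V_1,V_2\rangle\le\|V_2\|^2$ directly off two decompositions of $a_1V_1-a_2V_2$ and obtain strictness from $V_1\ne V_2$, which you correctly reduce to the hypothesis $\|F(0)-f_\delta\|>0$. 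Your route to the strict increase of $\psi$ is more direct and, as far as strictness is concerned, more complete than the paper's.

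The gap is exactly where you flagged it: the strict decrease of $\phi$. Your parallel-case analysis ($V_1=\lambda V_2$ with $\lambda=a_2/a_1$) only returns $a_1\lambda\ge a_2$, i.e.\ $\phi(t_1)\ge\phi(t_2)$ again, and the ``residual gap'' cannot be closed from the stated hypotheses. Counterexample: $H=\mathbb{R}$, $f_\delta=2$, $F(u)=\max(u-2,0)$ (monotone, $F(0)=0\ne f_\delta$, and smoothable near $u=2$ without affecting what follows); then $V_{\delta,a}=2/a$ for $a\ge 1$, so $\phi=a|V_{\delta,a}|\equiv 2$ on any $t$-interval where $a(t)\ge 1$. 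So strict decrease of $\phi$ is simply false for general monotone $F$; only $\phi(t_1)\ge\phi(t_2)$ for $t_1\le t_2$ can be proved. You are in good company: the paper's own proof asserts ``$\phi(t_1)<\phi(t_2)$'' in its second branch with no justification for the strict sign, and the essential downstream use of the lemma is the non-strict inequality (as in \eqref{3eq21}). The honest conclusion of your argument --- $\psi$ strictly increasing, $\phi$ non-increasing --- is what is actually provable and is all the main theorem needs; I would state it that way rather than chase the unattainable strict version.
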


\begin{proof}
Since $\|F(0)-f_\delta\|>0$, it follows that $\psi(t)\not=0,\, \forall t\ge0$.
Note that $\phi(t)=a(t)\|V_\delta(t)\|$. One has
\begin{equation}
\label{1eq3}
\begin{split}
0&\le \langle F(V_\delta(t_1))-F(V_\delta(t_2)),V_\delta(t_1)-V_\delta(t_2)\rangle\\
&= \langle -a(t_1)V_\delta(t_1)+a(t_2)V_\delta(t_2),V_\delta(t_1)-V_\delta(t_2)\rangle\\
&= (a(t_1)+a(t_2))\langle V_\delta(t_1),V_\delta(t_2) \rangle -a(t_1)\|V_\delta(t_1)\|^2 - a(t_2)\|V_\delta(t_2)\|^2.
\end{split}
\end{equation}
Thus,
\begin{equation}
\label{2eq6}
\begin{split}
0&\le (a(t_1)+a(t_2))\langle V_\delta(t_1),V_\delta(t_2) \rangle -a(t_1)\|V_\delta(t_1)\|^2 - a(t_2)\|V_\delta(t_2)\|^2\\
& \le  (a(t_1)+a(t_2))\|V_\delta(t_1)\|\|V_\delta(t_2) \| - a(t_1)\|V_\delta(t_1)\|^2 - a(t_2)\|V_\delta(t_2)\|^2\\
& = (a(t_1) \|V_\delta(t_1)\| - a(t_2) \|V_\delta(t_2)\|)(\|V_\delta(t_2)\|-\|V_\delta(t_1)\|)\\
& = (\phi(t_1)-\phi(t_2))(\psi(t_2)-\psi(t_1)).
\end{split}
\end{equation}
If $\psi(t_2)> \psi(t_1)$, then \eqref{2eq6} implies $\phi(t_1)\ge \phi(t_2)$, so
$$
a(t_1)\psi(t_1)\ge a(t_2)\psi(t_2)> a(t_2)\psi(t_1).
$$
Thus, if $\psi(t_2)> \psi(t_1)$, then $a(t_2)< a(t_1)$ and, therefore, $t_2> t_1$,
because $a(t)$ is decreasing.

Similarly, if $\psi(t_2)< \psi(t_1)$, then $\phi(t_1)< \phi(t_2)$.
This implies $a(t_2)> a(t_1)$, so $t_2< t_1$.

If $\psi(t_2)= \psi(t_1)$, then \eqref{1eq3} implies
$$
\|V_\delta(t_1)\|^2\le \langle V_\delta(t_1),V_\delta(t_2) \rangle \le \|V_\delta(t_1)\|\|V_\delta(t_2)\| = \|V_\delta(t_1)\|^2.
$$
This implies $V_\delta(t_1)=V_\delta(t_2)$, and then $a(t_1)=a(t_2)$. Hence, $t_1=t_2$, because $a(t)$ is decreasing.

Therefore, $\phi(t)$ is decreasing and $\psi(t)$ is increasing.
\end{proof}

\begin{lemma}
\label{lem1}
Suppose that $\|F(0)-f_\delta\|> C\delta$, \,$C>1$, and $a(0)$ is sufficiently
large. Then, there exists a unique $t_1>0$ such
that $\|F(V_\delta(t_1))-f_\delta\|=C\delta$.
\end{lemma}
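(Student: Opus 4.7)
The plan is to apply the intermediate value theorem to $\phi(t)=\|F(V_\delta(t))-f_\delta\|$. Uniqueness is free from Lemma~\ref{leminde}, which already states that $\phi$ is strictly decreasing, so everything reduces to showing that $\phi$ is continuous on $[0,\infty)$, that $\phi(0)>C\delta$, and that $\phi(t)<C\delta$ for some $t$.

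The value at $t=0$ is handled directly by Lemma~\ref{lem0}: $\|F(V_{\delta,a})-f_\delta\|\to\|F(0)-f_\delta\|$ as $a\to\infty$, and the right-hand side exceeds $C\delta$ by hypothesis, so choosing $a(0)$ sufficiently large forces $\phi(0)>C\delta$.

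For the behaviour as $t\to\infty$ I would exploit the identity $\phi(t)=a(t)\|V_\delta(t)\|$, immediate from \eqref{2eq2}, and compare $V_\delta(t)$ to the noise-free element $V_{0,a(t)}$. Subtracting the equations \eqref{2eq2} at noise levels $\delta$ and $0$, pairing with $V_{\delta,a}-V_{0,a}$, and using monotonicity of $F$ together with $\|f_\delta-f\|\le\delta$ yields the standard estimate
\begin{equation*}
\|V_{\delta,a}-V_{0,a}\|\le \delta/a.
\end{equation*}
Hence $\phi(t)\le a(t)\|V_{0,a(t)}\|+\delta$. By Lemma~\ref{rejectedlem}, $\|V_{0,a(t)}\|\to\|y\|$ as $a(t)\to 0$, so $a(t)\|V_{0,a(t)}\|\to 0$, giving $\limsup_{t\to\infty}\phi(t)\le\delta<C\delta$.

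Continuity of $\phi$ reduces to continuity of $V_\delta(\cdot)$, which I would obtain by subtracting \eqref{2eq2} at two times $s$ and $t$, pairing with $V_\delta(t)-V_\delta(s)$, and applying the same monotonicity trick as in the proof of Lemma~\ref{lem0}; this gives an estimate of the form
\begin{equation*}
\|V_\delta(t)-V_\delta(s)\|\le \frac{|a(t)-a(s)|}{\min(a(t),a(s))}\,\|V_\delta(s)\|,
\end{equation*}
which is $o(1)$ as $s\to t$ by continuity of $a(\cdot)$. The intermediate value theorem then produces $t_1>0$ with $\phi(t_1)=C\delta$, and strict monotonicity from Lemma~\ref{leminde} gives uniqueness. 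The only delicate step is the large-$t$ control: one must pass from $\phi(t)$ to $V_{0,a(t)}$ via the $\delta/a$ bound and then invoke Lemma~\ref{rejectedlem} to conclude that $a(t)\|V_{0,a(t)}\|\to 0$; everything else is a routine consequence of the preceding lemmas.
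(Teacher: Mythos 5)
Your proof is correct, and its skeleton (intermediate value theorem for existence, strict monotonicity of $\phi$ from Lemma~\ref{leminde} for uniqueness, Lemma~\ref{lem0} to secure $\phi(0)>C\delta$ for large $a(0)$) matches the paper's. The decisive step --- forcing $\phi(t)$ below $C\delta$ for large $t$ --- is, however, done by a genuinely different argument. The paper starts from the identity $\langle F(V_\delta)+aV_\delta-f_\delta,\,F(V_\delta)-f_\delta\rangle=0$, inserts $y$, uses monotonicity twice to obtain $\|F(V_\delta)-f_\delta\|^2\le \delta^2+a\|y\|\delta+a\|y\|\,\|F(V_\delta)-f_\delta\|$ together with $a\|V_\delta-y\|\le a\|y\|+\delta$, and then an $\epsilon$-Young inequality to conclude $\limsup_{t\to\infty}(1-\epsilon)\phi^2(t)\le\delta^2$. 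You instead exploit $\phi(t)=a(t)\|V_\delta(t)\|$, the perturbation bound $\|V_{\delta,a}-V_{0,a}\|\le\delta/a$, and the boundedness of $\|V_{0,a}\|$ to get the cleaner $\limsup_{t\to\infty}\phi(t)\le\delta$; both bounds are $<C\delta$ since $C>1$. Two remarks on your route: the $\delta/a$ estimate you rederive is exactly \eqref{rejected11}, which the paper establishes only in the remark \emph{following} this lemma, so you are effectively reordering the exposition (harmlessly, since that estimate is self-contained and does not use Lemma~\ref{lem1}); and you do not actually need the full strength of Lemma~\ref{rejectedlem} --- the uniform bound $\|V_{0,a}\|\le\|y\|$ of \eqref{rejected12} already gives $a(t)\|V_{0,a(t)}\|\to 0$. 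You also supply a proof of the continuity of $t\mapsto V_\delta(t)$ via $\|V_\delta(t)-V_\delta(s)\|\le |a(t)-a(s)|\,\|V_\delta(s)\|/\min(a(t),a(s))$, a point the paper merely asserts. No gaps.
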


\begin{proof}
The uniqueness of $t_1$ follows from Lemma~\ref{leminde}.
We have $F(y)=f$, and
\begin{align*}
0 = &\langle F(V_\delta)+aV_\delta-f_\delta, F(V_\delta)-f_\delta \rangle\\
 = &\|F(V_\delta)-f_\delta\|^2+a\langle V_\delta-y, F(V_\delta)-f_\delta \rangle + a\langle y, F(V_\delta)-f_\delta \rangle\\
 = &\|F(V_\delta)-f_\delta\|^2+a\langle V_\delta-y, F(V_\delta)-F(y) \rangle + a\langle V_\delta-y, f-f_\delta \rangle \\
 &+ a\langle y, F(V_\delta)-f_\delta \rangle\\
\ge & \|F(V_\delta)-f_\delta\|^2 + a\langle V_\delta-y, f-f_\delta \rangle + a\langle y, F(V_\delta)-f_\delta \rangle.
\end{align*}
Here the inequality $\langle V_\delta-y, F(V_\delta)-F(y) \rangle\ge0$ was used.
Therefore,
\begin{equation}
\label{1eq1}
\begin{split}
\|F(V_\delta)-f_\delta\|^2 &\le -a\langle V_\delta-y, f-f_\delta \rangle - a\langle y, F(V_\delta)-f_\delta \rangle\\
&\le a\|V_\delta-y\| \|f-f_\delta\| + a\|y\| \|F(V_\delta)-f_\delta\|\\
&\le  a\delta \|V_\delta-y\|  + a\|y\| \|F(V_\delta)-f_\delta\|.
\end{split}
\end{equation}
On the other hand, we have
\begin{align*}
0&= \langle F(V_\delta)-F(y) + aV_\delta +f -f_\delta, V_\delta-y\rangle\\
&=\langle F(V_\delta)-F(y),V_\delta-y\rangle + a\| V_\delta-y\| ^2 + a\langle y, V_\delta-y\rangle + \langle f-f_\delta, V_\delta-y\rangle\\
&\ge  a\| V_\delta-y\| ^2 + a\langle y, V_\delta-y\rangle + \langle f-f_\delta, V_\delta-y\rangle,
\end{align*}
where the inequality $\langle V_\delta-y, F(V_\delta)-F(y) \rangle\ge0$ was used. Therefore,
$$
a\|V_\delta-y\|^2 \le a\|y\|\|V_\delta-y\|+\delta\|V_\delta-y\|.
$$
This implies
\begin{equation}
\label{1eq2}
a\|V_\delta-y\|\le a\|y\|+\delta.
\end{equation}
From \eqref{1eq1} and \eqref{1eq2}, and an elementary inequality $ab\le \epsilon a^2+\frac{b^2}{4\epsilon},\,\forall\epsilon>0$, one gets
\begin{equation}
\label{3eq4}
\begin{split}
\|F(V_\delta)-f_\delta\|^2&\le \delta^2 + a\|y\|\delta + a\|y\| \|F(V_\delta)-f_\delta\|\\
&\le \delta^2 + a\|y\|\delta + \epsilon \|F(V_\delta)-f_\delta\|^2 +
\frac{1}{4\epsilon}a^2\|y\|^2,
\end{split}
\end{equation}
where $\epsilon>0$ is fixed, independent of $t$, and can be chosen
arbitrarily small.
Let $t\to\infty$ and $a=a(t)\searrow 0$. Then \eqref{3eq4} implies
$\limsup_{t\to\infty}(1-\epsilon)\|F(V_\delta)-f_\delta\|^2\le \delta^2$.
This, the continuity of $F$, the continuity of $V_\delta(t)$ on $[0,\infty)$,
and the assumption $\|F(0)-f_\delta\|>C\delta$, where $C>1$,
imply that equation $\|F(V_\delta(t))-f_\delta\|=C\delta$ must have a
solution $t_1>0$.
\end{proof}

\begin{remark}
{\rm
Let $V:=V_\delta(t)|_{\delta=0}$, so $F(V)+a(t)V-f=0$.
Let $y$ be the minimal-norm solution to $F(u)=f$.
We claim that
\begin{equation}
\label{rejected11}
\|V_{\delta}-V\|\le \frac{\delta}{a}.
\end{equation}
Indeed, from \eqref{2eq2} one gets
$$
F(V_{\delta}) - F(V) + a (V_{\delta}-V)=f- f_\delta.
$$
Multiply this equality by $(V_{\delta}-V)$ and use \eqref{ceq2} to obtain
\begin{align*}
\delta \|V_{\delta}-V\| &\ge \langle f-f_\delta, V_{\delta}-V \rangle\\
&= \langle F(V_{\delta}) - F(V) + a (V_{\delta}-V), V_{\delta}-V \rangle\\
&\ge a \|V_{\delta}-V\|^2.
\end{align*}
This implies \eqref{rejected11}.

Similarly, from the equation
$$
F(V) + a V -F(y)=0,
$$
one can derive that
\begin{equation}
\label{rejected12}
\|V\| \le \|y\|.
\end{equation}
%%Similar arguments one can find in \cite{R499}.
%
From \eqref{rejected11} and \eqref{rejected12}, one gets the following estimate:
\begin{equation}
\label{2eq1}
\|V_\delta\|\le \|V\|+\frac{\delta}{a}\le \|y\|+\frac{\delta}{a}.
\end{equation}
}
\end{remark}

%%%%%%%%%%%%%%%%%%%%%%%%%%%%%%%%%%%%%%%%%%%%%%

Let us recall the following lemma, which is basic in our proofs.
\begin{lemma}[\cite{R499}, p. 97]
\label{lemramm}
Let $\alpha(t),\,\beta(t),\,\gamma(t)$ be continuous
nonnegative functions on $[\tau_0,\infty)$,\, $\tau_0\ge 0$
is a fixed number. If there exists a function $\mu:=\mu(t)$, 
$$
\mu\in C^1[\tau_0,\infty),\quad \mu(t)>0, \quad \lim_{t\to\infty} \mu(t)=\infty,
$$
such that
\begin{align}
\label{1eq4}
0\le \alpha(t)&\le \frac{\mu(t)}{2}\bigg{[}\gamma -\frac{\dot{\mu}(t)}{\mu(t)}\bigg{]},
\qquad \dot{u}:=\frac{du}{dt},\\
\label{1eq5}
\beta(t)      &\le \frac{1}{2\mu(t)}\bigg{[}\gamma -\frac{\dot{\mu}(t)}{\mu(t)}\bigg{]},\\
\label{1eq6}
\mu(\tau_0)g(\tau_0)    &< 1,
\end{align}
and $g(t)\ge 0$ satisfies the inequality
\begin{equation}
\label{1eq7}
\dot{g}(t)\le -\gamma(t)g(t)+\alpha(t)g^2(t)+\beta(t),\quad t\ge \tau_0,
\end{equation}
then
%$g(t)$ exists on $[\tau_0,\infty)$ and
\begin{equation}
\label{3eq10}
0\le g(t) < \frac{1}{\mu(t)}\to 0,\quad \text{as} \quad t\to\infty.
\end{equation}
If inequalities \eqref{1eq4}--\eqref{1eq6} hold on an interval $[\tau_0,T)$, then
$g(t)$, the solution to inequality \eqref{1eq7}, exists on this interval and inequality \eqref{3eq10} holds on $[\tau_0,T)$.
\end{lemma}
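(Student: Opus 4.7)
The plan is to show that the scaled quantity $h(t) := \mu(t) g(t)$ stays below $1$ on $[\tau_0, \infty)$; since $\mu \to \infty$, this gives \eqref{3eq10}. First I would differentiate $h$ and invoke \eqref{1eq7}:
\[
\dot h = \dot\mu \, g + \mu \dot g \le -(\mu\gamma - \dot\mu)\, g + \mu \alpha g^2 + \mu \beta.
\]
Setting $A(t) := \mu(t)\gamma(t) - \dot\mu(t)$, which is nonnegative because $\alpha \ge 0$ combined with \eqref{1eq4} forces $\gamma \ge \dot\mu/\mu$, and substituting $g = h/\mu$ on the right, the bounds $\alpha \le A/2$ and $\beta \le A/(2\mu^2)$ from \eqref{1eq4}--\eqref{1eq5} should collapse the three terms into a perfect square
\[
\dot h(t) \le \frac{A(t)}{2\mu(t)}\bigl(h(t) - 1\bigr)^2.
\]
This is the central one-dimensional inequality.

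Next I would set $w(t) := 1 - h(t)$. By \eqref{1eq6} we have $w(\tau_0) > 0$, and the inequality above becomes $\dot w \ge -(A/2\mu)\, w^2$. As long as $w > 0$, dividing by $w^2$ rewrites this as
\[
\frac{d}{dt}\frac{1}{w(t)} = -\frac{\dot w(t)}{w(t)^2} \le \frac{A(t)}{2\mu(t)},
\]
which integrates to the explicit a priori bound
\[
\frac{1}{w(t)} \le \frac{1}{w(\tau_0)} + \int_{\tau_0}^t \frac{A(s)}{2\mu(s)}\, ds.
\]
The right-hand side is finite on every finite subinterval, so $w$ is bounded below by a strictly positive function and cannot vanish. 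To make this rigorous I would set $t^* := \inf\{t > \tau_0 : w(t) = 0\}$, assume $t^* < \infty$ for contradiction, note that the integral bound is valid on $[\tau_0, t^*)$ and forces $\liminf_{t \to t^{*-}} w(t) > 0$, contradicting $w(t^*) = 0$ by continuity. Therefore $h(t) < 1$ throughout, i.e., $g(t) < 1/\mu(t)$, and $\mu(t) \to \infty$ yields \eqref{3eq10}. The same argument restricted to $[\tau_0, T)$ covers the local version; the uniform bound $g < 1/\mu$ also prevents $g$ from blowing up, which is the existence claim.

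The main obstacle I anticipate is precisely the soft nature of the pointwise inequality at the threshold $h = 1$: because $(h-1)^2$ vanishes there, $\dot h \le \frac{A}{2\mu}(h-1)^2$ by itself only gives $\dot h \le 0$ at $h = 1$, which does not preclude a tangential touching $h(t^*) = 1$ with $\dot h(t^*) = 0$. A straightforward maximum-principle or barrier argument is therefore inconclusive. The resolution is exactly the passage to the integrated form $(1/w)' \le A/(2\mu)$: this converts the tangency issue into a statement that $1/w$ has finite growth on bounded intervals, which is genuinely stronger than anything one can read off from the pointwise sign of $\dot h$ alone and is what ultimately keeps $w$ strictly positive.
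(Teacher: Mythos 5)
Your argument is correct, but note that the paper itself gives no proof of this lemma: it is quoted verbatim from \cite{R499}, p.~97, so there is no in-text proof to compare against. The proof in the cited reference runs along barrier/comparison lines: one assumes a first time $T$ at which $g(T)=1/\mu(T)$, uses $g\le 1/\mu$ on $[\tau_0,T]$ together with \eqref{1eq4}--\eqref{1eq5} to get $\dot g(T)\le -\gamma/\mu+\alpha/\mu^2+\beta\le -\dot\mu/\mu^2=\frac{d}{dt}(1/\mu)\big|_{T}$, and then argues that the graph of $g$ cannot cross that of $1/\mu$. As you yourself observe, the delicate point of any such argument is that at the touching point one only gets a non-strict inequality between the derivatives, so tangential contact is not excluded by the pointwise comparison alone. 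Your route closes exactly this loophole: the substitution $h=\mu g$ with $A=\mu\gamma-\dot\mu\ge 2\alpha\ge 0$ does collapse the right-hand side into $\dot h\le \frac{A}{2\mu}(h-1)^2$ (the algebra checks out: $-\frac{Ah}{\mu}+\frac{Ah^2}{2\mu}+\frac{A}{2\mu}=\frac{A}{2\mu}(h-1)^2$), and integrating $\frac{d}{dt}(1/w)\le A/(2\mu)$ for $w=1-h$, which is legitimate since $A/(2\mu)=\tfrac12(\gamma-\dot\mu/\mu)$ is continuous and hence locally integrable, gives a quantitative positive lower bound for $w$ on every compact subinterval, so $w$ cannot vanish at a finite $t^*$. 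This yields $h<1$, i.e.\ $g<1/\mu$, on the whole interval, and the a priori bound supplies the continuation/existence claim in the usual way. So your proof is a valid, self-contained, and in one respect more careful derivation of the same conclusion; the only caveat is that it is an independent proof rather than a reconstruction of the one the authors rely on by citation.
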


\begin{lemma}
\label{lemma11}
Suppose $M_1, c_0$, and $c_1$ are positive constants and $0\not=y\in H$.
Then there exist $\lambda>0$ and a function $a(t)\in C^1[0,\infty)$, $0<a(t)\searrow 0$, such that
%$$
%|\dot{a}(t)|\le \frac{a^3(t)}{4},
%$$
%and
the following conditions hold:
\begin{align}
\label{eqzx0}
\frac{M_1}{\|y\|}&\le \lambda,\\
\label{eqzx1}
\frac{c_0}{a(t)}&\le \frac{\lambda}{2a(t)}\bigg{[}1-\frac{|\dot{a}(t)|}{a(t)}\bigg{]},\\
\label{eqzx2}
c_1\frac{|\dot{a}(t)|}{a(t)}&\le \frac{a(t)}{2\lambda}\bigg{[}1-\frac{|\dot{a}(t)|}{a(t)}\bigg{]},\\
\label{eqzx3}
\|F(0) - f_\delta\|& \le \frac{a^2(0)}{\lambda}.
\end{align}
\end{lemma}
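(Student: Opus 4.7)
\medskip

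\noindent
\textbf{Proof proposal.} The plan is to take a simple one-parameter family for $a(t)$ whose logarithmic derivative is bounded and decreasing, namely the ansatz $a(t) := a_0/(1+\alpha t)$, where $a_0 = a(0) > 0$ and $\alpha > 0$ are constants to be fixed. With this choice $a \in C^1[0,\infty)$, $a(t)\searrow 0$ as $t\to\infty$, and
\[
\frac{|\dot a(t)|}{a(t)} \;=\; \frac{\alpha}{1+\alpha t},
\]
which is a decreasing function of $t$ bounded above by $\alpha$. The useful consequence is that in each of the conditions \eqref{eqzx1}--\eqref{eqzx2} the right-hand side is monotone in $t$ in such a way that the tightest constraint occurs at $t=0$, so only the values at $t=0$ need to be checked.

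First I fix $\lambda$. To satisfy \eqref{eqzx0} take $\lambda \ge M_1/\|y\|$; in addition, to keep the bracket $1-|\dot a|/a$ useful, require $\lambda > 2c_0$. Both constraints can be met by choosing $\lambda$ sufficiently large. Next I choose $a_0 = a(0)$ large enough that \eqref{eqzx3} holds, i.e., $a_0 \ge \sqrt{\lambda\,\|F(0)-f_\delta\|}$. Finally I pick $\alpha > 0$ small enough that
\[
\alpha \;\le\; \min\!\Bigl\{\,1-\tfrac{2c_0}{\lambda}\,,\ \tfrac{a_0}{2\lambda c_1 + a_0}\Bigr\},
\]
which is possible because both quantities on the right are strictly positive given the previous choices of $\lambda$ and $a_0$.

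It remains to verify \eqref{eqzx1} and \eqref{eqzx2}. For \eqref{eqzx1}, clearing $a(t)$ reduces the claim to $2c_0/\lambda \le 1 - \alpha/(1+\alpha t)$; the right-hand side is minimized at $t=0$, giving $1-\alpha$, and the first bound on $\alpha$ yields $1-\alpha \ge 2c_0/\lambda$. For \eqref{eqzx2}, multiplying through by $(1+\alpha t)$ reduces the claim to $c_1\alpha \le (a_0/2\lambda)\bigl(1-\alpha/(1+\alpha t)\bigr)$; again the right-hand side is minimized at $t=0$, and the second bound on $\alpha$ rearranges exactly to $c_1\alpha \le (a_0/2\lambda)(1-\alpha)$.

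There is really no substantial obstacle here: this is a calibration lemma whose only mildly tricky point is finding a one-parameter family of $a(t)$ with a well-behaved logarithmic derivative. The ansatz $a(t) = a_0(1+\alpha t)^{-p}$ with any $0 < p \le 1$ would work equally well; I chose $p=1$ for cleanest bookkeeping and so that $|\dot a|/a$ has its maximum precisely at $t=0$.
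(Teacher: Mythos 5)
Your proof is correct and follows essentially the same route as the paper: the paper also takes a power-law family $a(t)=d/(c+t)^b$ (your ansatz is the case $b=1$, $c=1/\alpha$, $d=a_0/\alpha$), uses the uniform bound on $|\dot a|/a$ to reduce \eqref{eqzx1}--\eqref{eqzx2} to inequalities among the constants, and calibrates $\lambda$, $a(0)$, and the decay rate in the same order. The only cosmetic difference is that the paper secures \eqref{eqzx1} by a final rescaling $a\mapsto\kappa a$, $\lambda\mapsto\kappa\lambda$, whereas you impose $\lambda>2c_0$ from the start; both amount to the same thing.
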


\begin{proof}
Take
\begin{equation}
\label{24eq3}
a(t) = \frac{d}{(c+t)^b},\quad 0<b\le 1,\quad c\ge \max \big{(}2b,1\big{)}.
\end{equation}
Note that $|\dot{a}|=-\dot{a}$.
We have
\begin{equation}
\label{eql12}
\frac{|\dot{a}(t)|}{a(t)}=\frac{b}{c+t}\le
\frac{b}{c}\le\frac{1}{2},\qquad \forall t\ge 0.
\end{equation}
Hence,
\begin{equation}
\label{24eq2}
\frac{1}{2}\le 1-\frac{|\dot{a}(t)|}{a(t)},\qquad \forall t\ge 0 .
\end{equation}
Take
\begin{equation}
\label{3eq19}
\lambda\ge \frac{M_1}{\|y\|}.
\end{equation}
Then \eqref{eqzx0} is satisfied.

Choose $d$ such that
\begin{equation}
\label{eq11-5}
d \ge \max \bigg{(}\sqrt{c^{2b}\lambda \|F(0) - f_\delta\|}, 4b\lambda c_1 \bigg{)}.
\end{equation}
From equality \eqref{24eq3} and inequality \eqref{eq11-5} one gets
\begin{equation}
\frac{|\dot{a}(t)|}{a^2(t)} = \frac{b}{d(c+t)^{1-b}} \le \frac{b}{d} \le \frac{1}{4\lambda c_1},\qquad \forall t\ge 0.
\end{equation}
This and inequality \eqref{eql12} imply inequality \eqref{eqzx2}.
It follows from inequality \eqref{eq11-5} that
\begin{equation}
\|F(0) - f_\delta\| \le
\frac{d^2}{c^{2b}\lambda}=\frac{a^2(0)}{\lambda}.
\end{equation}
Thus, inequality \eqref{eqzx3} is satisfied.

Choose $\kappa\ge1$ such that
\begin{equation}
\label{23eq3}
\kappa > \max\bigg{(}\frac{4c_0}{\lambda}, 1\bigg{)}.
\end{equation}
Define
\begin{equation}
\label{23eq4}
\nu(t):=\kappa a(t),\quad \lambda_\kappa := \kappa \lambda.
\end{equation}
Note that inequalities \eqref{eqzx0}, \eqref{eqzx2}, \eqref{eqzx3}
and \eqref{eql12} still hold for $a(t)=\nu(t)$ and $\lambda = \lambda_\kappa$.

Using the inequalities \eqref{23eq3} and $c\ge 1$ and the definition
\eqref{23eq4}, one obtains
\begin{equation}
\frac{c_0}{\nu (t)}
\le \frac{\lambda \kappa }{4\nu(t)}
\le \frac{\lambda_\kappa}{2\nu(t)} \bigg{[}1-\frac{|\dot{\nu }|}{\nu }
\bigg{]}.
\end{equation}
Thus, one can replace the function $a(t)$ by $\nu(t)=\kappa a(t)$
and $\lambda$ by $\lambda=\lambda_\kappa$
to satisfy inequalities \eqref{eqzx0}--\eqref{eqzx3}.
\end{proof}

\begin{remark}
\label{rem8}
{\rm
In the proof of Lemma~\ref{lemma11} $a(0)$ and $\lambda$ can be chosen so
that $\frac{a(0)}{\lambda}$
is uniformly bounded as $\delta \to 0$ regardless of the rate
of growth of the constant $M_1=M_1(R)$ from formula \eqref{ceq3} when $R\to\infty$,
i.e., regardless of the strength of the nonlinearity $F(u)$.

Indeed, to satisfy \eqref{3eq19} one can choose $\lambda = \frac{M_1}{\|y\|}$.
To satisfy \eqref{eq11-5} one can choose
$$
d = \max \bigg{(}\sqrt{c^{2b}\lambda \|f_\delta - F(0)\|} ,
4b\lambda c_1\bigg{)}\leq \max \bigg{(}\sqrt{c^{2b}\lambda (\|f-F(0)\|+1)} ,
4b\lambda c_1\bigg{)},
$$
where we have assumed, without loss of generality, that $0<\delta< 1$.
With this choice of $d$ and $\lambda$, the ratio $\frac {a(0)}{\lambda}$
is bounded uniformly with respect to $\delta\in (0,1)$ and does not
depend on $R$.

Indeed, with the above choice one has $\frac {a(0)}{\lambda}=\frac {d}{c^{b}\lambda}\leq
\tilde{c}(1+\sqrt{\lambda^{-1}})\leq \tilde{c}$,
where $\tilde{c}>0$ is a constant independent of $\delta$, and one can
assume that $\lambda\geq 1$ without loss of generality.

This remark is used in Remark~\ref{hehehihiha}, where we prove
that the trajectory of $u_\delta(t)$, defined by \eqref{3eq12}, stays in a
ball $B(u_0,R)$ for all $0\le t\le t_\delta$, where the number $t_\delta$
is defined by formula \eqref{2eq3} (see below), and $R>0$ is sufficiently large.
An upper bound on $R$ is given in Remark~\ref{hehehihiha}.
}
\end{remark}

\begin{remark}
\label{xrem2}
{\rm
It is easy to choose $u_0\in H$ such that
\begin{equation}
\label{teq20}
g_0:=\|u_0-V_\delta(0) \|\le \frac{\|F(0)-f_\delta\|}{a(0)}.
\end{equation}
Indeed, if, for example, $u_0=0$, then by Lemmas~\ref{lem0} and \ref{leminde} one gets
$$
g_0=\|V_\delta(0)\|=\frac{a(0)\|V_\delta(0)\|}{a(0)} \le \frac{\|F(0)-f_\delta\|}{a(0)}.
$$
If \eqref{eqzx3} and \eqref{teq20} hold, then
$g_0 \le \frac{a(0)}{\lambda}.$ Inequality \eqref{teq20}
also holds if $||u_0-V_\delta(0)||$ is sufficiently small.
}
\end{remark}

\begin{lemma}
\label{lemauxi1}
Let $p,b$ and $c$ be positive constants. Then
\begin{equation}
\label{auxi1}
\bigg{(}p-\frac{b}{c}\bigg{)}\int_0^t \frac{e^{ps}}{(s+c)^b} ds < \frac{e^{pt}}{(c+t)^b},\qquad \forall c,b> 0,\quad t>0.
\end{equation}
\end{lemma}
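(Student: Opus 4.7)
The plan is to recast the inequality \eqref{auxi1} as the positivity of a single function of $t$ and then verify that positivity by checking its initial value and the sign of its derivative. Define
\[
h(t) := \frac{e^{pt}}{(c+t)^b} - \left(p - \frac{b}{c}\right)\int_0^t \frac{e^{ps}}{(s+c)^b}\,ds,
\]
so that the target inequality is equivalent to $h(t) > 0$ for $t > 0$.

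First, I would evaluate $h(0)=c^{-b}>0$, which supplies the strict slack at the left endpoint. Next, I would differentiate, using the Fundamental Theorem of Calculus on the integral term. The derivative of $e^{pt}/(c+t)^b$ contributes $pe^{pt}/(c+t)^b - be^{pt}/(c+t)^{b+1}$, while the integral contributes $-(p-b/c)\,e^{pt}/(c+t)^b$. The two ``$p$''-pieces cancel by design, and what remains simplifies to
\[
h'(t) \;=\; \frac{be^{pt}}{(c+t)^b}\left(\frac{1}{c}-\frac{1}{c+t}\right) \;=\; \frac{b\,t\,e^{pt}}{c\,(c+t)^{b+1}} \;\ge\; 0, \qquad t\ge 0.
\]
Hence $h$ is non-decreasing, so $h(t)\ge h(0)=c^{-b}>0$ for all $t\ge 0$, which is the strict inequality claimed in \eqref{auxi1}.

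No genuine obstacle arises here; the crux is the observation that the coefficient $p-\frac{b}{c}$ was chosen precisely so that the ``$p$''-contributions in $h'(t)$ cancel, leaving a manifestly non-negative remainder. A pleasant bonus is that no case split on the sign of $p-\frac{b}{c}$ is needed: when $p\le b/c$ the inequality \eqref{auxi1} is trivial (left side $\le 0$, right side $>0$), but the monotonicity argument above handles both regimes uniformly. An alternative route via integration by parts (integrate $e^{ps}$, differentiate $(s+c)^{-b}$, then use $(s+c)^{-b-1}\le c^{-1}(s+c)^{-b}$) works too but forces the case split and is strictly less clean.
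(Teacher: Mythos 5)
Your proof is correct and is essentially the paper's argument in a different packaging: the paper bounds $\frac{d}{ds}\bigl(e^{ps}(c+s)^{-b}\bigr)$ below by $\bigl(p-\frac{b}{c}\bigr)e^{ps}(c+s)^{-b}$ using $\frac{1}{c+s}\le\frac{1}{c}$ and integrates, which is exactly your statement $h'(t)\ge 0$ followed by $h(t)\ge h(0)=c^{-b}>0$. The only cosmetic difference is that you make the strictness at $t>0$ come from the positive initial value $h(0)$ explicitly, which the paper also does (via the retained $-c^{-b}$ term).
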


\begin{proof}
One has
\begin{align*}
\frac{d}{dt}\bigg{(}\frac{e^{pt}}{(c+t)^b}\bigg{)} &= \frac{pe^{pt}}{(c+t)^b}
- \frac{be^{pt}}{(c+t)^{b+1}}\\
&\ge \bigg{(}p-\frac{b}{c}\bigg{)}\frac{e^{pt}}{(c+t)^b},\qquad t\ge 0.
\end{align*}
Therefore,
\begin{align*}
\bigg{(}p-\frac{b}{c}\bigg{)}\int_0^t \frac{e^{ps}}{(s+c)^b} ds
&\le \int_0^t\frac{d}{ds}\frac{e^{ps}}{(c+s)^b}ds\\
&\le \frac{e^{pt}}{(c+t)^b} - \frac{1}{c^b}\le \frac{e^{pt}}{(c+t)^b}.
\end{align*}
Lemma~\ref{lemauxi1} is proved.
\end{proof}

\begin{lemma}
\label{lemauxi2}
Let $a(t)=\frac{d}{(c+t)^b}$ where $d,c,b>0$,\,
$c\ge 6b$. One has
\begin{equation}
\label{auxieq2}
e^{-\frac{t}{2}}\int_0^t e^\frac{s}{2}|\dot{a}(s)|\|V_\delta(s)\|ds \le
 \frac{1}{2}a(t)\|V_\delta(t)\|,\qquad t\ge 0.
\end{equation}
\end{lemma}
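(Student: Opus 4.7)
The plan is to exploit two monotonicity facts. First, from $a(t)=d/(c+t)^b$ one computes the pointwise identity $|\dot a(s)| = \frac{b}{c+s}\,a(s) \le \frac{b}{c}\,a(s)$. Second, Lemma~\ref{leminde} tells us that $\psi(s) = \|V_\delta(s)\|$ is increasing in $s$ (assuming $\|F(0)-f_\delta\|>0$; if $F(0)=f_\delta$ then $V_\delta\equiv 0$ and \eqref{auxieq2} is trivial). Hence for $s\in[0,t]$ we have $\|V_\delta(s)\| \le \|V_\delta(t)\|$, and I would pull this factor out of the integral as the first step.

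Once $\|V_\delta(t)\|$ is outside, the required bound \eqref{auxieq2} reduces to proving
\[
\frac{b}{c}\,e^{-t/2}\int_0^t e^{s/2}\,a(s)\,ds \le \tfrac{1}{2}\,a(t),
\]
which, after inserting $a(s)=d/(c+s)^b$, is of exactly the form to which Lemma~\ref{lemauxi1} applies with $p=1/2$ and exponent $b$. That lemma yields $\int_0^t e^{s/2}/(c+s)^b\,ds < (1/2 - b/c)^{-1}\,e^{t/2}/(c+t)^b$, provided $b/c < 1/2$, which turns the display above into a purely numerical inequality on $b/c$.

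The hypothesis $c\ge 6b$ is what makes the constants match up: it yields $b/c \le 1/6$ and $1/2 - b/c \ge 1/3$, so the overall prefactor coming out is $\frac{b/c}{1/2-b/c}\le \frac{1/6}{1/3} = \frac{1}{2}$, which is exactly the factor on the right of \eqref{auxieq2}. I do not anticipate any real obstacle, since the threshold $c\ge 6b$ is evidently tuned to make this combination close; the only design choice is whether to pull $\|V_\delta(t)\|$ out using the monotonicity of $\psi$ or to work instead with $\phi=a\psi$ decreasing, but the latter route would leave an unwanted $a(s)$ inside the integrand, so using $\psi$-monotonicity is the clean path.
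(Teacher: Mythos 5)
Your proposal is correct and follows essentially the same route as the paper: both pull $\|V_\delta(t)\|$ out of the integral via the monotonicity of $\|V_\delta(\cdot)\|$ from Lemma~\ref{leminde}, bound $|\dot a(s)|$ pointwise by a multiple of $a(s)$ using $c\ge 6b$, and invoke Lemma~\ref{lemauxi1} with $p=\tfrac12$; the only difference is that you collect the constants as $\frac{b/c}{1/2-b/c}\le\frac12$ at the end, while the paper packages the same arithmetic as $2|\dot a(s)|\le a(s)\bigl(\tfrac12-\tfrac{b}{c}\bigr)$ up front. Your explicit handling of the degenerate case $F(0)=f_\delta$ is a small additional care the paper leaves implicit.
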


\begin{proof}
Let $p=\frac{1}{2}$ in Lemma \ref{lemauxi1}. Then
\begin{equation}
\label{yeq54}
\bigg{(}\frac{1}{2}-\frac{b}{c}\bigg{)}\int_0^t \frac{e^{\frac{s}{2}}}{(s+c)^b} ds < \frac{e^{\frac{t}{2}}}{(c+t)^b},\qquad \forall c,b\ge 0.
\end{equation}
Since $c\ge 6b$ or $\frac{3b}{c}\le \frac{1}{2}$, one has
$$
\frac{1}{2} - \frac{b}{c} \ge \frac{2b}{c} \ge \frac{2b}{c+s},\qquad  s\ge 0.
$$
This implies
\begin{equation}
\label{yeq55}
a(s)\bigg{(}\frac{1}{2} -\frac{b}{c}\bigg{)}
= \frac{d}{(c+s)^b}\bigg{(}\frac{1}{2} -\frac{b}{c}\bigg{)} \ge \frac{2db}{(c+s)^{b+1}}=2|\dot{a}(s)|,\qquad s\ge 0.
\end{equation}
Multiplying \eqref{yeq55} by $e^{\frac{s}{2}}\|V_\delta(s)\|$,
integrating from $0$ to $t$, using
inequality \eqref{yeq54} and the fact that $\|V_\delta(s)\|$
is nondecreasing, one gets
$$
 e^{\frac{t}{2}}a(t)\|V_\delta(t)\| > \int_0^t e^{\frac{s}{2}}\|V_\delta(t)\|a(s)\bigg{(}\frac{1}{2}-\frac{b}{c}\bigg{)} ds
 \ge 2\int_0^t e^{\frac{s}{2}} |\dot{a}(s)| \|V_\delta(s)\|  ds,\qquad t\ge 0.
$$
This implies inequality \eqref{auxieq2}. Lemma~\ref{lemauxi2} is proved.
\end{proof}

\section{Main result}
\label{mainsec}

%Assume:
%\begin{equation}
%\label{3eq11}
%0<a(t)\searrow 0,\quad \lim_{t\to\infty}\frac{\dot{a}}{a}=0,
%\quad \frac{|\dot{a}|}{a}\le \frac{1}{2}
%,\quad \frac{|\dot{a}|}{a^2}\le 1.
%\end{equation}
%There are many $a(t)$ satisfying these conditions.
%For example, one may take $a(t)=\frac {d}{(c+t)^b}$,
%where $c,d,$ and $b>0$ are constants, $2b\leq c$,
%$0<b<1$, and $b<d c^{1-b}$.

Denote
$$
A:=F'(u_\delta(t)),\quad A_a:=A + aI,
$$
where $I$ is the identity operator,
and $u_\delta(t)$ solves the following Cauchy problem:
\begin{equation}
\label{3eq12}
\dot{u}_\delta = -A_{a(t)}^{-1}[F(u_\delta)+a(t)u_\delta-f_\delta],
\quad u_\delta(0)=u_0.
\end{equation}
We assume below that $||F(u_0)-f_\delta||>C_1\delta^{\zeta}$,
where $C_1>1$ and $\zeta\in (0,1]$ are some constants.
We also assume, without loss of generality, that $\delta\in (0,1)$.
%Indeed, if $\delta>1$, then one can multiply \eqref{aeq1}
%by a suitable constant and transform equation \eqref{aeq1}
%into the equation in which the noise level $c\delta<1$. Under such
%transformation the problem \eqref{3eq12} remains invariant because
%the constant will be cancelled.

Assume that equation $F(u)=f$ has a solution, possibly nonunique,
and $y$ is the minimal norm solution to this equation.
Let $f$ be unknown, but $f_\delta$ be given, $\|f_\delta-f\|\le \delta$.

\begin{theorem}
\label{mainthm}
Assume
%$a(t)$
%satisfy conditions \eqref{3eq11}, and choose
$a(t)=\frac{d}{(c+t)^b}$,
where $b\in(0,1]$,\,
$c,d>0$ are constants, $c>6b$,
and $d$ is sufficiently large so that conditions \eqref{eqzx1}--\eqref{eqzx3} hold.
Assume that $F:H\to H$ is a monotone operator, twice Fr\'{e}chet
differentiable, $\sup_{u\in B(u_0,R)}\|F^{(j)}(u)\|\le M_j(u_0,R),\, 0\le j\le 2$,
$B(u_0,R):=\{u:\|u-u_0\|\le R\}$, $u_0$ is an element of $H$, satisfying inequality
\eqref{teq20} and
%\eqref{deq47} (see below).
\begin{equation}
\label{eqx47}
\|F(u_0) + a(0)u_0 -f_\delta\|\le \frac{1}{4}a(0)\|V_\delta(0)\|,
\end{equation}
where $V_\delta(t):=V_{\delta,a(t)}$ solves \eqref{2eq2} with $a=a(t)$.  
Then the solution $u_\delta(t)$ to problem \eqref{3eq12}
exists on an interval $[0,T_\delta]$,\, $\lim_{\delta\to0}T_\delta=\infty$, and
there exists a unique $t_\delta$, $t_\delta\in (0,T_\delta)$ such that
$\lim_{\delta\to 0}t_\delta=\infty$ and
\begin{equation}
\label{2eq3}
\|F(u_\delta(t_\delta))-f_\delta\|=C_1\delta^\zeta,
\quad \|F(u_\delta(t))-f_\delta\|> C_1\delta^\zeta, \quad \forall t \in [0,t_\delta),
\end{equation}
where $C_1>1$ and $0<\zeta\le 1$. If $\zeta\in (0,1)$ and $t_\delta$
satisfies \eqref{2eq3}, then
\begin{equation}
\label{2eq4}
\lim_{\delta\to 0} \|u_\delta(t_\delta) - y\|=0.
\end{equation}
\end{theorem}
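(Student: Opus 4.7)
The plan is to compare the trajectory $u_\delta(t)$ of \eqref{3eq12} with the parallel family $V_\delta(t):=V_{\delta,a(t)}$ from \eqref{2eq2}, control the deviation $g(t):=\|u_\delta(t)-V_\delta(t)\|$ by $a(t)/\lambda$ via Lemma~\ref{lemramm}, and then at the stopping time $t_\delta$ exploit the triangle decomposition
\[
\|u_\delta(t_\delta)-y\|\le g(t_\delta)+\|V_\delta(t_\delta)-V_{0,a(t_\delta)}\|+\|V_{0,a(t_\delta)}-y\|,
\]
to which \eqref{rejected11} and Lemma~\ref{rejectedlem} apply once $a(t_\delta)\to 0$ and $\delta/a(t_\delta)\to 0$.

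To derive the key differential inequality I set $w=u_\delta-V_\delta$. A Taylor expansion of $F$ at $u_\delta$, combined with $F(V_\delta)+aV_\delta-f_\delta=0$, yields $F(u_\delta)+au_\delta-f_\delta=A_a w-R$ with $\|R\|\le(M_2/2)\|w\|^2$, so $\dot u_\delta=-w+A_a^{-1}R$. Differentiating the defining identity for $V_\delta$ and using monotonicity of $F'(V_\delta)$ gives $\|\dot V_\delta\|\le(|\dot a|/a)\|V_\delta\|$, and taking the inner product of $\dot w=\dot u_\delta-\dot V_\delta$ with $w$ produces
\[
\dot g(t)\le -g(t)+\frac{M_2}{2a(t)}g(t)^2+\frac{|\dot a(t)|}{a(t)}\|V_\delta(t)\|.
\]
Choosing $\mu(t)=\lambda/a(t)$ one has $\dot\mu/\mu=|\dot a|/a\le 1/2$ by \eqref{eql12}, and inequalities \eqref{eqzx1}--\eqref{eqzx2} of Lemma~\ref{lemma11} become precisely \eqref{1eq4}--\eqref{1eq5} with $c_0=M_2/2$ and $c_1$ an upper bound for $\|V_\delta\|$; the initial condition \eqref{1eq6} follows from \eqref{teq20}, \eqref{eqx47} and \eqref{eqzx3}. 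Lemma~\ref{lemramm} then yields $g(t)<a(t)/\lambda$, and the local Picard theorem applied to the smooth right-hand side of \eqref{3eq12} combined with this a priori bound gives existence of $u_\delta$ on $[0,T_\delta]$.

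The main obstacle is that the only available bound \eqref{2eq1} gives $\|V_\delta(t)\|\le\|y\|+\delta/a(t)$, which grows as $a(t)\to 0$, so a direct application of Lemma~\ref{lemma11} with constant $c_1$ only works while $\delta/a(t)$ stays controlled. I would split the forcing term as $(|\dot a|/a)\|V_\delta\|\le(|\dot a|/a)\|y\|+(|\dot a|/a^2)\delta$, handling the first, $\delta$-independent piece via Lemma~\ref{lemma11} with $c_1=\|y\|$, and bootstrapping the $\delta$-dependent remainder using the quadratic-term absorption trick (once $g\le a/\lambda$ with $\lambda\ge M_2$, $(M_2/(2a))g^2\le g/2$, so $\dot g\le -g/2+(|\dot a|/a)\|V_\delta\|$) together with the Gronwall-type integral estimates of Lemmas~\ref{lemauxi1}--\ref{lemauxi2}, which are designed exactly to tame the growing tail of $\|V_\delta\|$. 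The outcome is a self-consistent bound $g(t)<a(t)/\lambda$ on $[0,T_\delta]$ with $T_\delta\to\infty$ as $\delta\to 0$; this is the technically heaviest step.

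For the stopping rule, set $h(t):=\|F(u_\delta(t))-f_\delta\|$. Since $\|F(V_\delta)-f_\delta\|=a\|V_\delta\|$, the estimate $g<a/\lambda$ gives $|h(t)-a(t)\|V_\delta(t)\||\le M_1 a(t)/\lambda$. The hypothesis yields $h(0)>C_1\delta^\zeta$; for large $t$ the upper bound $h(t)\le a(t)(\|y\|+M_1/\lambda)+\delta$ falls below $C_1\delta^\zeta$, since $\delta<C_1\delta^\zeta$ for $\delta\in(0,1)$, $C_1>1$, $\zeta\in(0,1]$. Continuity therefore yields a unique first crossing $t_\delta\in(0,T_\delta)$ satisfying \eqref{2eq3}. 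To show $t_\delta\to\infty$, choose $\lambda>M_1/\|y\|$ so that $M_1/\lambda<\|y\|$: for any fixed $T$ with $a(T)$ small enough that $\|V_{0,a(T)}\|>M_1/\lambda$ (possible by Lemma~\ref{rejectedlem}), the lower bound $h(T)\ge a(T)(\|V_\delta(T)\|-M_1/\lambda)$ tends to $a(T)(\|V_{0,a(T)}\|-M_1/\lambda)>0$ as $\delta\to 0$ by \eqref{rejected11}, while $C_1\delta^\zeta\to 0$; hence $t_\delta>T$ for all small enough $\delta$, so $t_\delta\to\infty$ and $a(t_\delta)\to 0$. The upper bound at $t_\delta$ additionally gives $a(t_\delta)\ge(C_1\delta^\zeta-\delta)/(\|y\|+M_1/\lambda)\ge c\delta^\zeta$ for small $\delta$, so for $\zeta\in(0,1)$ one has $\delta/a(t_\delta)\le c^{-1}\delta^{1-\zeta}\to 0$. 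Inserting $g(t_\delta)\le a(t_\delta)/\lambda\to 0$, \eqref{rejected11}, and Lemma~\ref{rejectedlem} into the triangle decomposition above yields \eqref{2eq4}.
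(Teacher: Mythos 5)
Your overall strategy --- comparing $u_\delta(t)$ with $V_\delta(t)$, bounding $g(t)=\|u_\delta(t)-V_\delta(t)\|$ by $a(t)/\lambda$ via the differential inequality $\dot g\le -g+\frac{M_2}{2a}g^2+\frac{|\dot a|}{a}\|V_\delta\|$ and Lemma~\ref{lemramm}, and finishing with the triangle inequality together with \eqref{rejected11} and Lemma~\ref{rejectedlem} --- is exactly the paper's strategy, and the first half of your argument is sound. One remark on your ``technically heaviest step'': the paper avoids the split-and-bootstrap for the growing term $\delta/a(t)$ in $\|V_\delta(t)\|\le\|y\|+\delta/a(t)$ by simply restricting to the interval $[0,t_0]$ on which $\delta/a(t)\le \|y\|/(C-1)$ (so that $c_1=\|y\|(1+\frac{1}{C-1})$ is a genuine constant) and then checking, via Lemma~\ref{leminde} and the inequality $t_1\le t_0$, that the first crossing occurs before $t_0$; your bootstrap, if carried out, would end up confined to essentially the same region, since the $O(\delta)$ remainder can only be absorbed into $a(t)/\lambda$ while $\delta/a(t)$ stays bounded.

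The genuine gap is in your proof that $t_\delta\to\infty$. You establish that at a \emph{fixed} time $T$ the discrepancy $\|F(u_\delta(T))-f_\delta\|$ exceeds $C_1\delta^\zeta$ for all small $\delta$, and conclude $t_\delta>T$. But by \eqref{2eq3} $t_\delta$ is the \emph{first} time the discrepancy reaches $C_1\delta^\zeta$, so a large value at $t=T$ does not exclude a crossing at some $t'<T$ followed by a return above the threshold; to get $t_\delta>T$ you must keep the discrepancy above $C_1\delta^\zeta$ on all of $[0,T]$. Your lower bound $\|F(u_\delta(t))-f_\delta\|\ge a(t)(\|V_\delta(t)\|-M_1/\lambda)$ cannot do this: since $a(0)$ must be taken large, Lemma~\ref{lem0} gives $\|V_\delta(t)\|=O(1/a(t))$ near $t=0$, while $M_1/\lambda$ is of order $\|y\|$, so the right-hand side need not be positive for small $t$ and the bound is vacuous there. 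This is precisely the step for which the paper introduces the auxiliary quantity $h(t)=\|F(u_\delta)-F(V_\delta)+a(u_\delta-V_\delta)\|$, derives $\dot h\le -\tfrac12 h+|\dot a|\,\|V_\delta\|$ from monotonicity, and invokes hypothesis \eqref{eqx47} together with Lemma~\ref{lemauxi2} to obtain the \emph{multiplicative} lower bound $\|F(u_\delta(t))-f_\delta\|\ge \tfrac14 a(t)\|V_\delta(t)\|$ valid for all $t\ge0$; evaluated at $t_\delta$ this yields $a(t_\delta)\|V_\delta(t_\delta)\|\le 4C_1\delta^\zeta\to0$, hence $a(t_\delta)\to0$ and $t_\delta\to\infty$. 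Your proposal never uses \eqref{eqx47}, which is the tell-tale sign that this part of the argument is missing. (Your derivation of $\delta/a(t_\delta)\le c^{-1}\delta^{1-\zeta}\to0$ from the upper bound at $t_\delta$ is fine and matches \eqref{1eq14}.)
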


\begin{remark}
{\rm
One can choose $u_0$ satisfying  inequalities \eqref{teq20} and \eqref{eqx47} (see also \eqref{deq47} below).
Indeed, if $u_0$ is a sufficiently close approximation to $V_\delta(0)$, the solution to
equation \eqref{2eq2}, then inequalities \eqref{teq20} and \eqref{eqx47} are satisfied.
Note that inequality \eqref{eqx47} is a sufficient condition for
\eqref{1eq20} to hold. In our proof inequality \eqref{1eq20}
is used at $t=t_\delta$.
The stopping time $t_\delta$ is often sufficiently large for
the quantity $e^{-\frac{t_\delta}{2}}h_0$ to be  small. In this case
inequality \eqref{1eq20} with $t=t_\delta$ is satisfied for a wide range of
$u_0$.
For example,
in our numerical experiment in Section 4
the method converged rapidly when $u_0=0$. 
Condition $c>6b$ is used in the proof of Lemma \ref{lemauxi2}.
}
\end{remark}

\begin{proof}[Proof of Theorem \ref{mainthm}]
Denote
\begin{equation}
\label{beq18}
C:=\frac{C_1+1}{2}.
\end{equation}
Let
$$
w:=u_\delta-V_\delta,\quad g(t):=\|w\|.
$$
One has
\begin{equation}
\label{1eq8}
\dot{w}=-\dot{V}_\delta-A_{a(t)}^{-1}\big{[}F(u_\delta)-F(V_\delta)+a(t)w\big{]}.
\end{equation}
We use Taylor's formula and get
\begin{equation}
\label{1eq9}
F(u_\delta)-F(V_\delta)+aw=A_a w+ K, \quad \|K\| \le\frac{M_2}{2}\|w\|^2,
\end{equation}
where $K:=F(u_\delta)-F(V_\delta)-Aw$, and $M_2$ is the constant from the estimate \eqref{ceq3}.
%$$
%\sup_{v\in B(v_0,R)} \|F''(v)\|\le M_2(R)=:M_2.
%$$
Multiplying \eqref{1eq8} by $w$ and using \eqref{1eq9} one gets
\begin{equation}
\label{3eq17}
g\dot{g}\le -g^2+\frac{M_2}{2}\|A_{a(t)}^{-1}\|g^3+\|\dot{V}_\delta\|g.
\end{equation}
Let $t_0$ be such that
\begin{equation}
\label{4eq18}
\frac{\delta}{a(t_0)}= \frac{1}{C-1}\|y\|,\qquad C>1.
\end{equation}
This $t_0$ exists and is unique since $a(t)>0$ monotonically decays
to 0 as $t\to\infty$.

Since $a(t)>0$  monotonically decays, one has
\begin{equation}
\label{eqthieu}
\frac{\delta}{a(t)}\le\frac{1}{C-1}\|y\|,\qquad 0\leq t\leq t_0.
\end{equation}
By Lemma~\ref{lem1},
there exists $t_1$ such that
\begin{equation}
\label{3eq18}
\|F(V_\delta(t_1))-f_\delta\|=C\delta,\quad F(V_\delta(t_1))+a(t_1)V_\delta(t_1)-f_\delta=0.
\end{equation}
{\it We claim that $t_1\in[0,t_0]$.}

 Indeed, from \eqref{2eq2} and
\eqref{2eq1} one gets
$$
C\delta=a(t_1)\|V_\delta(t_1)\|\le a(t_1)\bigg{(}\|y\|+ \frac{\delta}{a(t_1)}\bigg{)}=a(t_1)\|y\|+\delta,\quad C>1,
$$
so
$$
\delta\le \frac{a(t_1)\|y\|}{C-1}.
$$
Thus,
$$
\frac{\delta}{a(t_1)}\le \frac{\|y\|}{C-1}=\frac{\delta}{a(t_0)}.
$$
Since $a(t)\searrow 0$, one has $t_1\le t_0$.

Differentiating both sides of \eqref{2eq2} with respect to $t$, one obtains
$$
A_{a(t)}\dot{V_\delta} = -\dot{a}V_\delta.
$$
This implies
\begin{equation}
\label{beq24}
\begin{split}
\|\dot{V_\delta}\|\le |\dot{a}|\|A_{a(t)}^{-1} V_\delta\| 
&\le \frac{|\dot{a}|}{a}\|V_\delta\|\le \frac{|\dot{a}|}{a}\bigg{(}\|y\|+\frac{\delta}{a}\bigg{)}\\ 
&\le \frac{|\dot{a}|}{a}\|y\|\bigg{(}1+\frac{1}{C-1}\bigg{)},\qquad \qquad \forall t\le t_0.
\end{split}
\end{equation}
Since $g\ge 0$, inequalities \eqref{3eq17} and \eqref{beq24} imply
\begin{equation}
\label{1eq10}
\dot{g}\le -g(t)+\frac{c_0}{a(t)}g^2+\frac{|\dot{a}|}{a(t)}c_1,
\quad c_0=\frac{M_2}{2},\quad c_1=\|y\|\bigg{(}1+\frac{1}{C-1}\bigg{)}.
\end{equation}
Here we have used the estimate
$$
\|A_a^{-1}\|\le \frac{1}{a}
$$
and the relations
$$
A_a:=F'(u)+aI,\quad F'(u):=A\ge 0.
$$

Inequality \eqref{1eq10} is of the type \eqref{1eq7} with
$$
\gamma(t)=1,\quad \alpha(t)=\frac{c_0}{a(t)},\quad \beta(t)=c_1\frac{|\dot{a}|}{a(t)}.
$$
Let us check assumptions \eqref{1eq4}--\eqref{1eq6}.
Take
$$
\mu(t)=\frac{\lambda}{ a(t)},
$$
where $\lambda=const>0$ and satisfies conditions \eqref{1eq4}--\eqref{1eq6} in Lemma~\ref{lemma11}.
%By Lemma~\ref{lemma11} there exist $\lambda$ and $a(t)$ such that conditions
%\eqref{1eq4}--\eqref{1eq6} hold.
Since $u_0$ satisfies inequality \eqref{teq20}, one gets $g(0)\le \frac{a(0)}{\lambda}$, by Remark~\ref{xrem2}.
This, inequalities \eqref{1eq4}--\eqref{1eq6}, and Lemma \ref{lemramm}
yield
\begin{equation}
\label{eq*}
g(t)<\frac{a(t)}{\lambda},\quad \forall t\le t_0, \qquad
g(t):=\|u_\delta(t)-V_\delta(t)\|.
\end{equation}
Therefore,
\begin{equation}
\label{1eq11}
\begin{split}
\|F(u_\delta(t))-f_\delta\|\le& \|F(u_\delta(t))-F(V_\delta(t))\|+\|F(V_\delta(t))-f_\delta\|\\
\le& M_1g(t)+\|F(V_\delta(t))-f_\delta\|\\
\le& \frac{M_1a(t)}{\lambda} + \|F(V_\delta(t))-f_\delta\|,\qquad \forall t\le t_0.
\end{split}
\end{equation}
It is proved in Section~\ref{sec2}, Lemma~\ref{leminde},
that $\|F(V_\delta(t))-f_\delta\|$ {\it is decreasing}.
Since $t_1\le t_0$ , one gets
\begin{equation}
\label{3eq21}
\|F(V_\delta(t_0))-f_\delta\|\le \|F(V_\delta(t_1))-f_\delta\|= C\delta.
\end{equation}
This, inequality \eqref{1eq11}, the inequality $\frac{M_1}{\lambda}\le \|y\|$ (see \eqref{3eq19}), the relation \eqref{4eq18},
 and the definition $C_1=2C-1$ (see \eqref{beq18}), imply
\begin{equation}
\label{1eq12}
\begin{split}
\|F(u_\delta(t_0))-f_\delta\|
\le& \frac{M_1a(t_0)}{\lambda} + C\delta\\
\le& \frac{M_1\delta (C-1)}{\lambda\|y\|} + C\delta\le (2C-1)\delta=C_1\delta.
\end{split}
\end{equation}
%We have used the fact that $\frac{M_1}{\lambda}\le \|y\|$.
Thus, if
$$
\|F(u_\delta(0))-f_\delta\|> C_1\delta^\gamma,\quad 0<\gamma\le 1,
$$
then, by the continuity of the function $t\to
\|F(u_\delta(t))-f_\delta\|$ on $[0,\infty)$,
there exists $t_\delta \in (0,t_0)$ such that
\begin{equation}
\label{3eq23}
\|F(u_\delta(t_\delta))-f_\delta\|=C_1\delta^\gamma
\end{equation}
for any given $\gamma\in (0,1]$, and any fixed $C_1>1$.

{\it Let us prove \eqref{2eq4}}.

From \eqref{1eq11}  with $t=t_\delta$,
and from \eqref{2eq1}, one gets
\begin{align*}
C_1\delta^\zeta &\le M_1 \frac{a(t_\delta)}{\lambda} +
a(t_\delta)\|V_\delta(t_\delta)\|\\
&\le M_1 \frac{a(t_\delta)}{\lambda} + \|y\|a(t_\delta)+\delta.
\end{align*}
Thus, for sufficiently small $\delta$, one gets
$$
\tilde{C}\delta^\zeta \le a(t_\delta)
\bigg{(}\frac{M_1}{\lambda}+\|y\|\bigg{)},\quad \tilde{C}>0,
$$
where $\tilde{C}< C_1$ is a constant.
Therefore,
\begin{equation}
\label{1eq14}
\lim_{\delta\to 0} \frac{\delta}{a(t_\delta)}\le
\lim_{\delta\to 0}
\frac{\delta^{1-\zeta}}{\tilde{C}}\bigg{(}\frac{M_1}{\lambda}+\|y\|\bigg{)}
=0,\quad 0<\zeta<1.
\end{equation}

{\it We claim that}
\begin{equation}
\label{1eq15}
\lim_{\delta\to0}t_\delta = \infty.
\end{equation}
Let us prove \eqref{1eq15}.
Using \eqref{3eq12}, one obtains
$$
\frac{d}{dt}\big{(}F(u_\delta)+au_\delta - f_\delta\big{)}= A_a\dot{u}_\delta + \dot{a}u_\delta
= -\big{(}F(u_\delta)+au_\delta - f_\delta\big{)} + \dot{a}u_\delta.
$$
This and \eqref{2eq2} imply
\begin{equation}
\label{beq32}
\frac{d}{dt}\big{[}F(u_\delta)-F(V_\delta)+a(u_\delta-V_\delta)\big{]}
= -\big{[}F(u_\delta)-F(V_\delta) + a(u_\delta - V_\delta)\big{]} + \dot{a}u_\delta.
\end{equation}
Denote
$$
v:=v(t):=F(u_\delta(t))-F(V_\delta(t))+a(t)(u_\delta(t)-V_\delta(t)),\qquad
h:=h(t):=\|v\|.
$$
Multiplying \eqref{beq32} by $v$, one obtains
\begin{equation}
\label{2eq5}
\begin{split}
h\dot{h} &= -h^2 +\langle v,\dot{a}(u_\delta-V_\delta)\rangle + \dot{a}
\langle v,V_\delta\rangle\\
&\le -h^2 + h|\dot{a}|\|u_\delta-V_\delta\| + |\dot{a}|h\|V_\delta\|,
\qquad h\ge 0.
\end{split}
\end{equation}
Thus,
\begin{equation}
\label{beq34}
\dot{h}\le -h + |\dot{a}|\|u_\delta - V_\delta\| + |\dot{a}|\|V_\delta\|.
\end{equation}
Since $\langle F(u_\delta)-F(V_\delta),u_\delta-V_\delta\rangle \ge 0$, one
obtains from the two equations
$$
\langle v, u_\delta-V_\delta\rangle=
\langle F(u_\delta)-F(V_\delta) +a(t)(u_\delta-V_\delta),u_\delta-V_\delta
\rangle
$$
and
$$
\langle v,F(u_\delta)-F(V_\delta)\rangle=\|F(u_\delta)-F(V_\delta)\|^2
+a(t)
\langle u_\delta-V_\delta, F(u_\delta)-F(V_\delta) \rangle,
$$ 
the
following
two inequalities:
\begin{equation}
\label{beq35}
a\|u_\delta - V_\delta\|^2 \le \langle v, u_\delta-V_\delta \rangle \le
\|u_\delta - V_\delta\|h
\end{equation}
and
\begin{equation}
\label{beq36}
\|F(u_\delta)-F(V_\delta)\|^2\le \langle v, F(u_\delta)-F(V_\delta) \rangle
\le h\|F(u_\delta)-F(V_\delta)\|.
\end{equation}
Inequalities \eqref{beq35} and \eqref{beq36} imply
\begin{equation}
\label{1eq16}
a\|u_\delta-V_\delta\|\le h,\quad \|F(u_\delta)-F(V_\delta)\|\le h.
\end{equation}
Inequalities \eqref{beq34} and \eqref{1eq16} imply
\begin{equation}
\label{beq38}
\dot{h} \le -h\bigg{(}1-\frac{|\dot{a}|}{a}\bigg{)} +|\dot{a}|\|V_\delta\|.
\end{equation}
Since $1-\frac{|\dot{a}|}{a}\ge \frac{1}{2}$ because $c\ge 2b$, inequality \eqref{beq38} holds if
\begin{equation}
\label{beq39}
\dot{h} \le -\frac{1}{2}h + |\dot{a}|\|V_\delta\|.
\end{equation}
Inequality \eqref{beq39} implies
\begin{equation}
\label{1eq17}
h(t)\le h(0)e^{-\frac{t}{2}} + e^{-\frac{t}{2}}\int_0^t e^{\frac{s}{2}}|\dot{a}|\|V_\delta\|ds.
\end{equation}
From \eqref{1eq17} and \eqref{1eq16}, one gets
\begin{equation}
\|F(u_\delta(t))-F(V_\delta(t))\| \le
h(0)e^{-\frac{t}{2}} + e^{-\frac{t}{2}}\int_0^t e^{\frac{s}{2}}|\dot{a}|\|V_\delta\|ds.
\end{equation}
Therefore,
\begin{equation}
\label{1eq18}
\begin{split}
\|F(u_\delta(t))-f_\delta\|&\ge \|F(V_\delta(t))-f_\delta\|-\|F(V_\delta(t))-F(u_\delta(t))\|\\
&\ge a(t)\|V_\delta(t)\| - h(0)e^{-\frac{t}{2}} -
e^{-\frac{t}{2}}\int_0^t e^{\frac{s}{2}} |\dot{a}| \|V_\delta\|ds.
\end{split}
\end{equation}
From the results in Section~\ref{sec2}  (see Lemma~\ref{lemauxi2}), it follows that
there exists an
$a(t)$ such that
\begin{equation}
\label{1eq19}
\frac{1}{2}a(t)\|V_\delta(t)\| \ge  e^{-\frac{t}{2}}\int_0^te^\frac{s}{2}|\dot{a}| \|V_\delta(s)\|ds.
\end{equation}
For example, one can choose
\begin{equation}
\label{ddeq47}
a(t)=\frac{d}{(c+t)^b}, \quad 6b<c,
\end{equation}
where
$d,c,b>0$.
Moreover, one can always choose $u_0$ such that
\begin{equation}
\label{deq47}
h(0)=\|F(u_0) + a(0)u_0 -f_\delta\| \le \frac{1}{4} a(0)\|V_\delta(0)\|,
\end{equation}
because the equation $F(u_0) + a(0)u_0 -f_\delta=0$ is solvable.
If \eqref{deq47} holds, then
$$
h(0)e^{-\frac{t}{2}}\le \frac{1}{4}a(0)\|V_\delta(0)\|e^{-\frac{t}{2}},\qquad
t\ge 0.
$$
If $2b<c$, then  \eqref{ddeq47} implies $$e^{-\frac{t}{2}}a(0)\le a(t).$$
 Therefore,
\begin{equation}
\label{1eq20}
e^{-\frac{t}{2}}h(0)
\le \frac{1}{4} a(t)\|V_\delta(0)\|
\le \frac{1}{4} a(t)\|V_\delta(t)\|,\quad t\ge 0,
\end{equation}
where we have used the inequality $\|V_\delta(t)\|\le \|V_\delta(t')\|$ for
$t<t'$, established in Lemma~\ref{leminde} in Section~\ref{sec2}.
From \eqref{3eq23} and \eqref{1eq18}--\eqref{1eq20}, one gets
$$
C_1\delta^\zeta = \|F(u_\delta(t_\delta))-f_\delta\|\ge
\frac{1}{4}a(t_\delta)\|V_\delta(t_\delta)\|.
$$
Thus,
$$
\lim_{\delta\to0}a(t_\delta)\|V_\delta(t_\delta)\|\le
\lim_{\delta\to0}4C_1\delta^\zeta = 0.
$$
Since $\|V_\delta(t)\|$ increases (see Lemma~\ref{leminde}), the above
formula implies
$\lim_{\delta\to0}a(t_\delta)=0$. Since $0<a(t)\searrow 0$,
it follows that $\lim_{\delta \to 0}t_\delta=\infty$, i.e.,
\eqref{1eq15} holds.

It is now easy to finish the proof of the Theorem~\ref{mainthm}.

From the triangle inequality and inequalities \eqref{eq*} and
\eqref{rejected11} one obtains
\begin{equation}
\label{eqhic56}
\begin{split}
\|u_\delta(t_\delta) - y\| &\le \|u_{\delta}(t_\delta) - V_{\delta}(t_\delta)\| +
\|V(t_\delta) - V_\delta(t_\delta)\| + \|V(t_\delta) - y\|\\
&\le \frac{a(t_\delta)}{\lambda} + \frac{\delta}{a(t_\delta)} + \|V(t_\delta)-y\|.
\end{split}
\end{equation}
Note that $V(t_\delta) = V_{0,a(t_\delta)}$ (see equation \eqref{2eq2}).
From \eqref{1eq14}, \eqref{1eq15}, inequality \eqref{eqhic56} and Lemma~\ref{rejectedlem}, one obtains
\eqref{2eq4}.
Theorem~\ref{mainthm} is proved.
\end{proof}

\begin{remark}
\label{hehehihiha}
{\rm
The trajectory $u_\delta(t)$ remains in the ball
$B(u_0,R):=\{u: \|u-u_0\|<R\}$ for all $t\leq t_\delta$, where $R$ does
not depend on $\delta$ as $\delta\to 0$. Indeed,
estimates \eqref{eq*}, \eqref{2eq1} and \eqref{eqthieu} imply
\begin{equation}
%label{}
\begin{split}
\|u_\delta(t)-u_0\|&\le
\|u_\delta(t)-V_\delta(t)\|+\|V_\delta(t)\|+\|u_0\|\\
&\le
\frac{a(0)}{\lambda} +\frac{C\|y\|}{C-1} +\|u_0\|:=R,
\qquad \forall t\le t_\delta.
\end{split}
\end{equation}
Here we have used the fact that $t_\delta < t_0$ (see the proof of
Theorem~\ref{mainthm}).
Since one can choose $a(t)$ and $\lambda$ so that $\frac{a(0)}{\lambda}$
is uniformly bounded as $\delta\to 0$ and regardless of the growth of
$M_1$ (see Remark~\ref{rem8}) one concludes that $R$ can be chosen
independent of $\delta$ and $M_1$.
}
\end{remark}

%\section{Auxiliary results}
%\label{sec2}

\section{Numerical experiments}

\subsection{An experiment with an operator defined on $H=L^2[0,1]$}

Let us do a numerical experiment solving nonlinear equation \eqref{aeq1} with
\begin{equation}
\label{1eq41}
F(u):= B(u)+ \big{(}\arctan(u)\big{)}^3:=\int_0^1e^{-|x-y|}u(y)dy + \big{(}\arctan(u)\big{)}^3.
\end{equation}
%One can check that $u(x)\equiv 1$ solves  the equation $F(u)=f$.
%The operator $B$ is compact in $H=L^2[0,1]$. The operator $u\longmapsto u^3$
%is defined on a dense subset $D$ of of $L^2[0,1]$, for example, on $D:=C[0,1]$.
Since the function $u\to \arctan^3u$ is increasing on $\mathbb{R}$, one
has
\begin{equation}
\langle \big{(}\arctan(u)\big{)}^3 - \big{(}\arctan(v)\big{)}^3,u-v\rangle %= \int_0^1(\arctan^2(u)- \arctan^2(v))(u-v)dx
\ge 0,\qquad \forall\, u,v\in H.
\end{equation}
Moreover,
\begin{equation}
\label{eq77}
e^{-|x|} = \frac{1}{\pi}\int_{-\infty}^\infty \frac{e^{i\lambda x}}{1+\lambda^2} d\lambda.
\end{equation}
Therefore, $\langle B(u-v),u-v\rangle\ge0$, so
\begin{equation}
\langle F(u-v),u-v\rangle\ge0,\qquad \forall\, u,v\in H.
\end{equation}
Thus, $F$ is a monotone operator. Note that
$$
\langle \big{(}\arctan(u)\big{)}^3 - \big{(}\arctan(v)\big{)}^3,u-v\rangle %= \int_0^1(\arctan^2(u)- \arctan^2(v))(u-v)dx
= 0\quad \text{iff}\quad  u=v\quad a.e.
$$
Therefore, the operator $F$, defined in \eqref{1eq41}, is injective and equation \eqref{aeq1}, with this $F$, has at most one solution. 

The Fr\'{e}chet derivative of $F$ is
\begin{equation}
\label{eq44}
F'(u)w = \frac{3\big{(}\arctan(u)\big{)}^2}{1+u^2}w +
\int_0^1 e^{-|x-y|}w(y) dy.
\end{equation}
If $u(x)$ vanishes on a set of positive Lebesgue measure, then $F'(u)$
is not boundedly invertible.
If $u\in C[0,1]$ vanishes even at one point $x_0$, then $F'(u)$
is not boundedly invertible in $H$.

In numerical implementation of the DSM, one often discretizes the Cauchy 
problem \eqref{3eq12} 
and gets a system of ordinary differential equations (ODEs). 
Then, one can use numerical methods for solving ODEs to solve
the system of ordinary differential equations obtained from discretization. 
There are many numerical methods for solving ODEs (see, e.g.,
\cite{Hairer}).

In practice one does not have to compute $u_\delta(t_\delta)$
exactly but can use an approximation
to $u_\delta(t_\delta)$ as a stable solution to equation \eqref{aeq1}.
To calculate such an approximation, one can use, for example, the iterative 
scheme
\begin{equation}
\label{eq11-51}
\begin{split}
u_{n+1} &= u_n - (F'(u_n)+a_n I)^{-1}(F(u_n)+a_nu_n - f_\delta),\\
u_0 &= 0,
\end{split}
\end{equation}
and stop iterations at $n:=n_\delta$ such that the following inequality holds:
\begin{equation}
\label{eq53}
\|F(u_{n_\delta}) - f_\delta\| <C \delta^\gamma,\quad
\|F(u_{n}) - f_\delta\|\ge C\delta^\gamma,\quad n<n_\delta ,\quad C>1,\quad \gamma \in(0,1).
\end{equation}
The existence of the stopping time $n_\delta$ is proved  in \cite[p. 733]{R546}
and the choice $u_0=0$ is also justified in this paper.
%The drawback of the iterative scheme \eqref{eq11-51} compared to the DSM
%in this paper is that
%the solution $u_{n_\delta}$ may converge not to the minimal-norm
% solution to equation \eqref{aeq1} but to another solution to this
%equation, if this equation has many solutions.
Iterative scheme \eqref{eq11-51} and stopping rule \eqref{eq53} are used in
the numerical experiments. We proved in \cite[p. 733]{R546} that
$u_{n_\delta}$ converges to $u^*$, a solution of \eqref{aeq1}.
Since $F$ is injective as discussed above, we conclude that 
$u_{n_\delta}$ converges
to the unique solution of equation \eqref{aeq1} as $\delta$ tends to 0. 
%However, $u^*$ may be not the minimal-norm solution to \eqref{aeq1}.
The accuracy and stability are the key issues in solving the
Cauchy problem.
The iterative scheme \eqref{eq11-51} can be considered formally as the
explicit Euler's method
with the stepsize $h=1$ (see, e.g., \cite{Hairer}).
There might be  other iterative schemes which are more efficient
than scheme \eqref{eq11-51}, but this scheme is simple
and easy to implement.
% simplest method but it is
%not one of the most efficient methods for solving ODEs.

Integrals of the form
$\int_0^1 e^{-|x-y|}h(y)dy$ in \eqref{1eq41} and \eqref{eq44} are computed by
using
the trapezoidal rule. The noisy function used in the test is
$$
f_\delta(x) = f(x) + \kappa f_{noise}(x),\quad \kappa>0.
$$
The noise level $\delta$ and the relative noise level are defined by
the formulas
$$
\delta = \kappa\| f_{noise}\|,\quad \delta_{rel}:=\frac{\delta}{\|f\|}.
$$
In the test $\kappa$ is computed in such a way that the relative noise level
$\delta_{rel}$ equals some desired value, i.e.,
$$
\kappa = \frac{\delta}{\| f_{noise}\|}=\frac{\delta_{rel}\|f\|}{\| f_{noise}\|}.
$$
We have used the relative noise level as an input parameter in the test.

In all the figures the $x$-variable runs through the interval $[0,1]$,
and the graphs represent the numerical solutions
$u_{DSM}(x)$ and the exact solution $u_{exact}(x)$.

In the test we took $h=1$, $C =1.01$, and $\gamma = 0.99$.
The exact solution in the test is
\begin{equation}
\label{eqnewfunction}
u_e(x)=
\left \{
\begin{matrix} 0 &\text{if} \qquad \frac{1}{3}\le x\le \frac{2}{3},\\
1 & \text{otherwise},
\end{matrix}
\right .
\end{equation}
here $x\in[0,1]$, and the right-hand side
is $f=F(u_e)$.
As mentioned above, $F'(u)$ is not boundedly invertible 
in any neighborhood of $u_e$.

It is proved in \cite{R546} that one can take
$a_n=\frac{d}{1+n}$, and $d$ is
sufficiently large. However, in practice, if we choose $d$ too large,
then the method will use too many iterations before reaching the stopping
time $n_\delta$ in \eqref{eq53}. This means that the computation time will
be large in this case. Since
$$
\|F(V_\delta) - f_\delta\| = a(t)\|V_\delta\|,
$$
and $\|V_\delta(t_\delta) - u_\delta(t_\delta)\|=O(a(t_\delta))$,
we have
$$
C\delta^\gamma =\|F(u_\delta(t_\delta)) - f_\delta\|\leq
a(t_\delta)\|V_\delta\|+ O(a(t_\delta)),
$$
and we choose
$$
d = C_0\delta^\gamma,\qquad C_0>0.
$$
In the experiments our method works well with
$C_0\in[7,10]$.
In numerical experiments, we found out that the method diverged for smaller $C_0$.
In the
test we chose $a_n$ by the formula $a_n := C_0\frac{\delta^{0.99}}{n+1}$.
The number of nodal points, used in
computing integrals in \eqref{1eq41} and \eqref{eq44}, was $N=100$.
The accuracy of the solutions obtained
in the tests with $N=30$ and $N=50$ was
slightly less accurate than the one for $N=100$.

Numerical results for various values of $\delta_{rel}$ are presented in Table~\ref{table1}.
In this experiment,
the noise function $f_{noise}$ is a vector with random entries normally
distributed, with
mean value 0 and variance 1.
Table~\ref{table1} shows that the iterative scheme yields good numerical results.
\begin{table}[ht]
\caption{Results when $C_0=7$, $N=100$ and $u = u_e$.}
\label{table1}
\centering
\small
\begin{tabular}{|@{  }c@{\hspace{2mm}}
@{\hspace{2mm}}|c@{\hspace{2mm}}|c@{\hspace{2mm}}|c@{\hspace{2mm}}|c@{\hspace{2mm}}|
c@{\hspace{2mm}}|c@{\hspace{2mm}}|c@{\hspace{2mm}}r@{\hspace{2mm}}l@{}}
\hline
$\delta_{rel}$       &0.02   &0.01    &0.005    &0.003 &0.001\\
\hline
Number of iterations  &57     &57     &58   &58    &59\\
\hline
$\frac{\|u_{DSM} - u_{exact}\|}{\|u_{exact}\|}$  &0.1437   &0.1217    &0.0829   &0.0746    &0.0544\\
\hline
\end{tabular}
\end{table}

Figure~\ref{figmono} presents the numerical results when $N=100$ and
$C_0=7$ with $\delta_{rel}=0.01$ and $\delta_{rel} = 0.005$. The
numbers of iterations for $\delta=0.01$ and $\delta = 0.005$ were 57
and 58, respectively.
\begin{figure}[!h!tb]
\centerline{%
\includegraphics[scale=0.75]{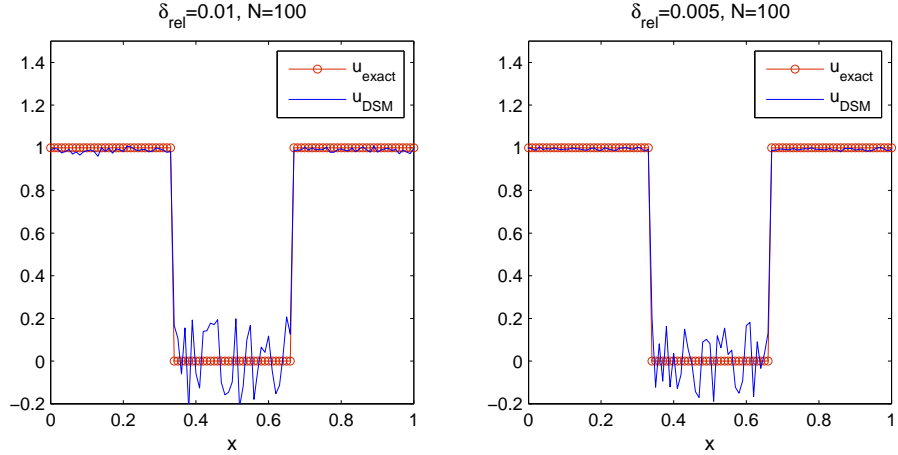}}
\caption{Plots solutions obtained by the DSM when $N = 100$, $\delta_{rel}=0.01$ (left)
and $\delta_{rel}=0.005$ (right).}
\label{figmono}
\end{figure}

Figure~\ref{figmono2} presents the numerical results
when $N=100$ and $C_0=7$ with $\delta=0.003$ and $\delta = 0.001$.
In these cases, it took 58 and 59 iterations to get the numerical
solutions for
$\delta_{rel}=0.003$ and $\delta_{rel} = 0.001$, respectively.

\begin{figure}[!h!tb]
\centerline{%
\includegraphics[scale=0.75]{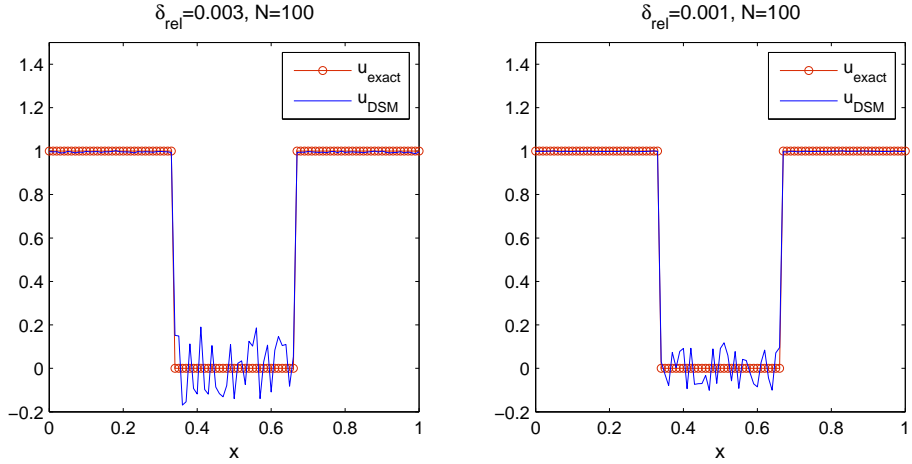}}
\caption{Plots solutions obtained by the DSM when $N = 100$, $\delta_{rel}=0.003$ (left)
and $\delta_{rel}=0.001$ (right).}
\label{figmono2}
\end{figure}

We also carried out numerical experiments with $u(x)\equiv 1, \, x\in [0,1]$, as the exact solution. 
Note that $F'(u)$ is boundedly invertible at this exact solution. 
However, in any arbitrarily small (in $L^2$ norm) neighborhood of this 
solution, 
there are infinitely many elements $u$ at which $F'(u)$ is not 
boundedly invertible, because, as we have pointed out earlier,
$F'(u)$ is not boundedly invertible if
$u(x)$ is continuous and vanishes at some point $x\in [0,1]$. 
In this case one cannot use the usual methods like 
Newton's method or the Newton-Kantorovich method. 
Numerical results for this experiment are presented in Table~\ref{table3}. 
\begin{table}[ht] 
\caption{Results when $C_0=4$, $N=50$ and $u(x)\equiv 1, \, x\in [0,1]$.}
\label{table3}
\centering
\small
\begin{tabular}{|@{  }c@{\hspace{2mm}}
@{\hspace{2mm}}|c@{\hspace{2mm}}|c@{\hspace{2mm}}|c@{\hspace{2mm}}|c@{\hspace{2mm}}|
c@{\hspace{2mm}}|c@{\hspace{2mm}}|c@{\hspace{2mm}}r@{\hspace{2mm}}l@{}} 
\hline
$\delta_{rel}$       &0.05   &0.03   &0.02    &0.01    &0.003 &0.001\\
\hline
Number of iterations &28	 &29     &28     &29   &29    &29\\
\hline 
$\frac{\|u_{DSM} - u_{exact}\|}{\|u_{exact}\|}$&0.0770	 &0.0411   &0.0314    &0.0146   &0.0046    &0.0015\\
\hline
\end{tabular}
\end{table}

From Table~\ref{table3} one concludes that the method works well in this experiment. 

\subsection{An experiment with an operator defined on a dense subset of $H=L^2[0,1]$}

Our second numerical experiment with the equation $F(u)=f$
deals with the operator $F$ which is not defined on all of $H=L^2[0,1]$,
but on a dense subset $D=C[0,1]$ of $H$:
\begin{equation}
F(u):= B(u)+u^3:=\int_0^1e^{-|x-y|}u(y)dy +u^3.% \quad f(x): = 3-e^{-x}-\frac{e^x}{e}.
\end{equation}
Therefore, the assumptions of Theorem~\ref{mainthm} are not satisfied.
Our goal is to show by this numerical example, that
numerically our method may work for an even wider class of problems than
that covered by Theorem~\ref{mainthm}.

%One can check that $u(x)\equiv 1$ solves  the equation $F(u)=f$.
The operator $B$ is compact in $H=L^2[0,1]$. The operator $u\longmapsto u^3$
is defined on a dense subset $D$ of $L^2[0,1]$, for example, on $D:=C[0,1]$.
If $u,v\in D$, then
\begin{equation}
\label{eqxy85}
\langle u^3-v^3,u-v\rangle = \int_0^1(u^3-v^3)(u-v)dx \ge 0.
\end{equation}
This and the inequality $\langle B(u-v),u-v\rangle\ge0$,
followed from equality \eqref{eq77}, imply
$$
\langle F(u-v),u-v\rangle\ge0,\qquad \forall u,v\in D.
$$
Note that the equal sign of inequality \eqref{eqxy85} happens iff $u=v$ a.e. in Lebesgue measure. 
Thus, $F$ is injective. Therefore, the element $u_{n_\delta}$ obtained from the iterative scheme 
\eqref{eq11-51} and the stopping rule \eqref{eq53} converges to the exact solution $u_e$
as $\delta$ goes to 0.

Note that $D$ does not contain subsets open in $H=L^2[0,1]$, i.e.,
it does not contain interior points of $H$.
This is a reflection of the fact that the operator $G(u)=u^3$
is unbounded on any open subset of $H$.
For example, in any ball $\|u\|\le C$, $C=const>0$,
where $\|u\|:=\|u\|_{L^2[0,1]}$, there is an element $u$ such that
$\|u^3\|=\infty$. As such an element one can take, for example,
$u(x)=c_1 x^{-b}$, $\frac{1}{3}<b<\frac{1}{2}$.
Here $c_1>0$ is a constant chosen so that
$\|u\|\leq C$.
The operator $u\longmapsto F(u)=G(u)+B(u)$ is maximal monotone on $D_F:=\{u:u\in H,\, F(u)\in H\}$ (see \cite[p. \#102]{D}),
so that equation \eqref{2eq2} is uniquely solvable for any $f_\delta\in H$.

The Fr\'{e}chet derivative of $F$ is
\begin{equation}
\label{eq442}
F'(u)w = 3u^2 w + \int_0^1 e^{-|x-y|}w(y) dy.
\end{equation}
If $u(x)$ vanishes on a set of positive Lebesgue measure, then $F'(u)$
is obviously not boundedly invertible.
If $u\in C[0,1]$ vanishes even at one point $x_0$, then $F'(u)$ is not boundedly invertible in $H$.

We also use the iterative scheme \eqref{eq11-51} with the stopping 
rule \eqref{eq53}.

We use the same exact solution $u_e$ as in \eqref{eqnewfunction}. The right-hand side $f$
is computed by $f=F(u_e)$. Note that $F'$ is not boundedly invertible in any neighborhood of $u_e$.

In experiments we found that our method works well with $C_0\in[1,4]$. Indeed, in the
test we chose $a_n$ by the formula $a_n := C_0\frac{\delta^{0.9}}{n+6}$.
The number of node points used in
computing integrals in \eqref{1eq41} and \eqref{eq44} was $N = 30$. In the
test, the accuracy of the solutions obtained when $N=30$, $N=50$ were
slightly less accurate than the one when $N = 100$.

Numerical results for various values of $\delta_{rel}$ are presented in Table~\ref{table2}.
In this experiment,
the noise function $f_{noise}$ is a vector with random entries normally distributed of
mean 0 and variance 1.
Table~\ref{table2} shows that the iterative scheme yields good numerical results.
\begin{table}[ht]
\caption{Results when $C_0=2$ and $N=100$.}
\label{table2}
\centering
\small
\begin{tabular}{|@{  }c@{\hspace{2mm}}
@{\hspace{2mm}}|c@{\hspace{2mm}}|c@{\hspace{2mm}}|c@{\hspace{2mm}}|c@{\hspace{2mm}}|
c@{\hspace{2mm}}|c@{\hspace{2mm}}|c@{\hspace{2mm}}r@{\hspace{2mm}}l@{}}
\hline
$\delta_{rel}$         &0.02   &0.01    &0.005    &0.003 &0.001\\
\hline
Number of iterations     &16     &17      &17   &17    &18\\
\hline
$\frac{\|u_{DSM} - u_{exact}\|}{\|u_{exact}\|}$  &0.1387   &0.1281    &0.0966   &0.0784    &0.0626\\
\hline
\end{tabular}
\end{table}

Figure~\ref{figmono3} presents the numerical results when
$f_{noise}(x) = \sin(3\pi x)$  for $\delta_{rel}=0.02$ and
$\delta_{rel} = 0.01$. The number of iterations when $C_0=2$ for
$\delta_{rel}=0.02$ and $\delta_{rel} = 0.01$ were 16 and 17,
respectively.

\begin{figure}[!h!tb]
\centerline{%
\includegraphics[scale=0.75]{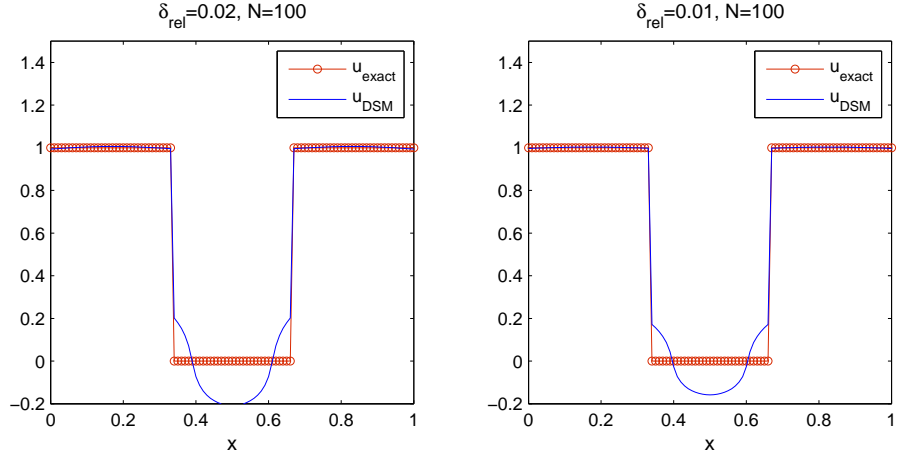}}
\caption{Plots solutions obtained by the DSM with $f_{noise}(x)=\sin(3\pi x)$ when $N = 100$, $\delta_{rel}=0.02$ (left)
and $\delta_{rel}=0.01$ (right).}
\label{figmono3}
\end{figure}

Figure~\ref{figmono4} presents the numerical results
when $f_{noise}(x) = \sin(3\pi x)$ with $\delta_{rel}=0.003$ and $\delta_{rel} = 0.001$.
We also used $C_0=2$.
In these cases, it took 17 and 18 iterations to give the numerical solutions for
$\delta_{rel}=0.003$ and $\delta_{rel} = 0.001$, respectively.

\begin{figure}[!h!tb]
\centerline{%
\includegraphics[scale=0.75]{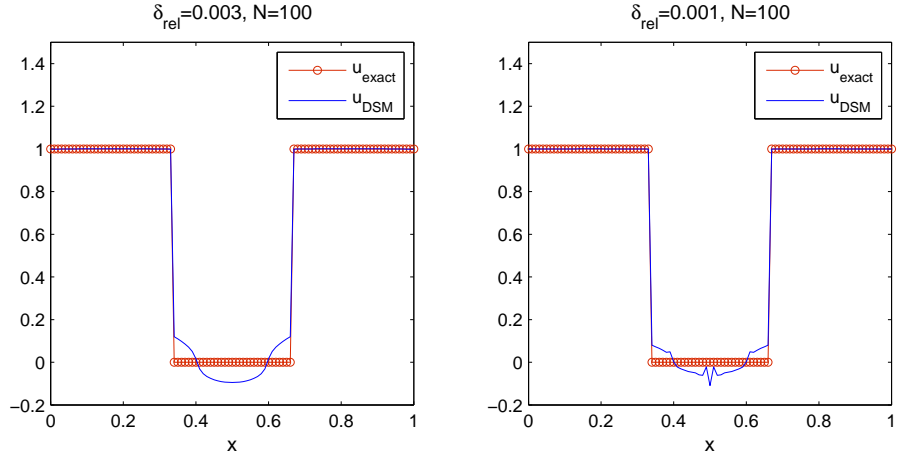}}
\caption{Plots solutions obtained by the DSM with $f_{noise}(x)=\sin(3\pi x)$ when $N = 100$, $\delta_{rel}=0.003$ (left)
and $\delta_{rel}=0.001$ (right).}
\label{figmono4}
\end{figure}

We have included the results of the  numerical experiments with 
$u(x)\equiv 1, \, x\in [0,1]$, as the exact solution. 
The operator $F'(u)$ is boundedly invertible in $L^2([0,1])$ at this exact 
solution.
However, in any arbitrarily small  $L^2$-neighborhood of this solution, 
there are infinitely many elements $u$ at which $F'(u)$ is not 
boundedly invertible
as was mentioned  above. 
Therefore, even in this case one cannot use the usual methods such as Newton's 
method or the Newton-Kantorovich method. 
Numerical results for this experiment are presented in Table~\ref{table4}.
\begin{table}[ht] 
\caption{Results when $C_0=1$, $N=30$ and $u(x)=1,\, x\in [0,1]$.}
\label{table4}
\centering
\small
\begin{tabular}{|@{  }c@{\hspace{2mm}}
@{\hspace{2mm}}|c@{\hspace{2mm}}|c@{\hspace{2mm}}|c@{\hspace{2mm}}|c@{\hspace{2mm}}|
c@{\hspace{2mm}}|c@{\hspace{2mm}}|c@{\hspace{2mm}}r@{\hspace{2mm}}l@{}} 
\hline
$\delta_{rel}$       &0.05   &0.03   &0.02    &0.01    &0.003 &0.001\\
\hline
Number of iterations &7	 &8     &8      &9   &10    &10\\
\hline 
$\frac{\|u_{DSM} - u_{exact}\|}{\|u_{exact}\|}$&0.0436	 &0.0245   &0.0172    &0.0092   &0.0026    &0.0009\\
\hline
\end{tabular}
\end{table}

From the numerical experiments we can conclude that the method works well in this experiment.
Note that the function $F$ used in this experiment is not defined on the whole space $H=L^2[0,1]$
but defined on a dense subset $D=C[0,1]$ of $H$.

\end{document}